\theoremstyle{plain}
\newtheorem{theorem}{Theorem}[section]
\newtheorem{lemma}[theorem]{Lemma}
\newtheorem{proposition}[theorem]{Proposition}
\theoremstyle{definition}
\newtheorem{definition}[theorem]{Definition}
\newcommand{\wh}{\widehat}
\newcommand{\bitem}{\begin{itemize}}
\newcommand{\eitem}{\end{itemize}}
\newcommand{\mc}[1]{\mathcal{#1}}
\newcommand{\II}{\mathbb{I}}
\newcommand{\N}{\mathbb{N}}
\newcommand{\R}{\mathbb{R}}
\newcommand{\bpm}{\begin{pmatrix}}
\newcommand{\epm}{\end{pmatrix}}
\newcommand{\bvm}{\begin{vmatrix}}
\newcommand{\evm}{\end{vmatrix}}
\newcommand{\bsm}{\left(\begin{smallmatrix}}
\newcommand{\esm}{\end{smallmatrix}\right)}
\newcommand{\T}{\top}
\newcommand{\ol}[1]{\overline{#1}}
\newcommand{\la}{\langle}
\newcommand{\ra}{\rangle}
\newcommand{\mrm}[1]{\mathrm{#1}}
\newcommand{\veps}{\varepsilon}
\newcommand{\dd}{\textup{d}}
\newcommand{\eins}{\mathbf{1}}
\DeclareMathSymbol{\mydiv}{\mathbin}{symbols}{"04}
\DeclareMathOperator{\Diag}{Diag}
\DeclareMathOperator{\supp}{supp}
\DeclareMathOperator{\vvec}{vec}
\DeclareMathOperator{\mrank}{rank}
\DeclareMathOperator{\mkernel}{ker}
\DeclareMathOperator{\mimg}{img}
\DeclareMathOperator{\NE}{NE}
\DeclareMathOperator{\KL}{KL}
\newcommand{\p}{{p}}
\newcommand{\SM}{{\mc{S}}}
\newcommand{\Sn}{{\mc{S}_N}}
\newcommand{\WM}{{\mc{W}}}
\newcommand{\BS}{{\eins_\SM}}
\newcommand{\BW}{{\eins_\WM}}
\newcommand{\TW}{{T_0\WM}}
\newcommand{\ROS}{{R}}
\newcommand{\ROW}{{\mc{R}}}
\newcommand{\Graph}{{\mc{G}}}
\newcommand{\Nodes}{{\mc{V}}}
\newcommand{\Edges}{{\mc{E}}}
\newcommand{\imT}{{\mc{T}}}
\newcommand{\Margin}{M} 
\begin{document}

\title[]{A Geometric Embedding Approach to Multiple Games and Multiple Populations}

\author*[1]{\fnm{Bastian} \sur{Boll}}\email{bastian.boll@iwr.uni-heidelberg.de}

\author[1]{\fnm{Jonas} \sur{Cassel}}

\author[1]{\fnm{Peter} \sur{Albers}}

\author[2]{\fnm{Stefania} \sur{Petra}}

\author[1]{\fnm{Christoph} \sur{Schn\"orr}}

\affil[1]{\orgdiv{Institute for Mathematics}, \orgname{Heidelberg University}, \orgaddress{\street{Im Neuenheimer Feld 205}, \city{Heidelberg}, \postcode{69120}, \state{Baden-W\"urttemberg}, \country{Germany}}}

\affil[2]{\orgdiv{Institute for Mathematics}, \orgname{Augsburg University}, \orgaddress{\street{Universit\"{a}tsstra{\ss}e 2}, \city{Augsburg}, \postcode{86159}, \state{Bavaria}, \country{Germany}}} 

\abstract{This paper studies a meta-simplex concept and geometric embedding framework for multi-population replicator dynamics. Central results are two embedding theorems which  constitute a formal reduction of multi-population replicator dynamics to single-population ones. In conjunction with a robust mathematical formalism, this provides a toolset for analyzing complex multi-population models. Our framework  provides a unifying perspective on different population dynamics in the literature which in particular enables to establish a formal link between multi-population and multi-game dynamics.}

\keywords{Replicator dynamics, Assignment Flows, Manifold Embedding, Information Geometry}

\maketitle

\tableofcontents

\section{Introduction}\label{sec:intro}

\subsection{Overview, Contribution}
Evolutionary game theory \citep{Hofbauer:1998, Sandholm2010} is an established framework for modeling problems in diverse areas ranging from mathematical biology \citep{Smith:1973,Smith:1982,Hammerstein:1994,Nowak:2006aa,Leimar:2023} to economics \citep{Gardner:2003,Samuelson:2016}.
It assumes a dynamic perspective on games played by a large and well-mixed population of agents. In this context, the earliest dynamical model of population state is the \emph{replicator equation} \citep{Taylor:1978, Schuster:1983}, which has since been generalized in several ways \citep{Bednar:2007,Cressman:2014} to accommodate more complex situations.

This paper provides an embedding approach for studying the following two classes of scenarios within a single framework.
\begin{itemize}
\item
\emph{Multi-population dynamics} model multiple interacting populations or species. The state space is a product manifold of multiple simplices, and payoff may depend on the state of all populations. The resulting dynamics are multiple coupled replicator dynamics on the product manifold of multiple simplices.
\item
\emph{Multi-game dynamics} model agents that simultaneously play multiple games, earning cumulative payoff. The state space is a single simplex with dimension growing exponentially in the number of games. Interaction between games occurs whenever the population state is outside of a specific submanifold.
\end{itemize}

Here, we study an embedding of multiple probability simplices into a combinatorially large simplex of joint distributions. As further detailed in Section \ref{sec:conclusion}, our approach is closely related to \emph{Segre embeddings} of projective spaces \citep{gortzAlgebraicGeometrySchemes2020} which play a prominent role in many areas of mathematics and physics, such as \emph{independence models} in algebraic statistics \citep{drtonLecturesAlgebraicStatistics2009} and \emph{entanglement} in quantum mechanics \citep{Bengtsson:2017aa}.
Based on this Ansatz, we develop a geometric perspective and formalism to study the relationship between replicator dynamics of multiple populations and multi-game replicator dynamics. In particular, we demonstrate that the multi-game dynamics of \cite{Hashimoto:2006} share a generic payoff structure with multi-population games. 

Our work further constitutes a formal reduction of multi-population dynamics to -- a much higher-dimensional -- single-population dynamics, which is helpful for theoretical analysis. We demonstrate this by transferring two results on the asymptotic behavior of replicator dynamics from the single-population to the multi-population setting. 

Concerning applications, our work aims to provide insight into the structure of multi-population and multi-game dynamics, along with a robust mathematical toolset for domain experts to analyze complex systems. 
Indeed, there is a growing need for more powerful dynamical models in emerging applications. For instance, \cite{Venkateswaran:2019} argue for the use of generalized replicator dynamics to model interactions in nature -- considering multi-player interaction in a multi-game setting.

The present paper also extends our previous work on \emph{assignment flows} \citep{Astroem2017, Schnorr2019aa}. They are dynamical systems that leverage interaction along edges of a graph $\Graph$ to infer an assignment of class labels to the nodes of $\Graph$ from node-wise data.
Applications include structured prediction problems such as semantic image segmentation in supervised \citep{Sitenko:2021vu} and unsupervised scenarios \citep{Zisler:2020aa,Zern:2020ab}.

In \citep{Boll:2021vb}, we have shown that assignment flows can be seen as multi-population replicator dynamics and studied how payoff is transformed by embedding the state space into a single simplex of joint distributions. In the present work, we generalize this analysis to nonlinear payoff functions and provide a careful study of the involved manifolds.

We also highlight previous findings on assignment flows and their relevance to the evolutionary game theory community. In particular
\begin{itemize}
\item \cite{Zern:2020aa} present an exhaustive study of conditions under which certain assignment flows converge to integer assignments. These are states in which only a single played strategy remains in each population. 
\item \cite{Zeilmann:2020aa} have proposed a generically applicable framework for geometric numerical integration which scales to large replicator dynamics.
\item \cite{Huhnerbein:2021th, Zeilmann:2022ul} have studied methods of learning the parameters that generate replicator dynamics from data.
\end{itemize}

\subsection{Organization}
Section~\ref{sec:preliminaries} contextualizes assignment flows as multi-population replicator dynamics and establishes related notation. 
Section~\ref{sec:embedding} describes the proposed meta-simplex concept and related geometric notions as well as the embedding theorems~\ref{prop:isometric_embedding_T} and~\ref{theorem:af_embedding}, which constitute the main results of the present work.
Section~\ref{sec:multi_games} gives three examples of dynamics considered in prior work and establishes their relationship through the lens of the geometric embedding theorems.
Section~\ref{sec:tangent_parameterization} recapitulates a tangent space parameterization of replicator dynamics from the literature on assignment flows and studies it in the context of geometric embedding.
Section~\ref{sec:learning} highlights previous findings on parameter learning for assignment flows.
Section~\ref{sec:asymptotic_behavior} demonstrates how the proposed formal reduction of multi-population to single-population dynamics can be used as a tool for formal analysis of asymptotic behavior.
Section~\ref{sec:conclusion} gives an outlook on current assignment flow developments and concludes the paper.\\[1em]
The present work substantially extends the conference paper \citep{Boll:2021vb} in the following ways:
\begin{itemize}
	\item The submanifold of embedded multi-population states is identified as a generalized Wright manifold and its geometry is analyzed (Theorem~\ref{prop:isometric_embedding_T}).
	\item The embedding theorem of multi-population replicator dynamics is generalized to nonlinear payoff functions (Theorem~\ref{theorem:af_embedding}).
	\item Multi-game dynamics are studied as embedded multi-population replicator dynamics (Section~\ref{sec:multi_games}).
	\item Tangent space parameterization of replicator dynamics is studied in the context of geometric embedding (Theorem~\ref{theorem:af_tangent_embedding}).
\end{itemize}

\subsection{Basic Notation.}

For $k\in\N$ we use the shorthands $[k] := \{ 1, \ldots, k\} \subset \N$ and $\eins_k := (1, \ldots, 1)^\T \in \R^k$. Angle brackets $\la \cdot, \cdot\ra$ are used for both the standard inner product between vectors and the Frobenius inner product between matrices. The Kronecker product of matrices \citep{Graham:1981wj} is denoted by $A\otimes B$. 
Componentwise multiplication of vectors $x$ and $y$ is denoted by $x\diamond y$, and by $\frac{x}{y}$ the componentwise division of a vector $x$ by a strictly positive vector $y$. Likewise, logarithms and exponentials of vectors apply componentwise.
For vectors $x\in \R^c$, the expression $x \geq 0$ denotes $x_i \geq 0$ for all $i \in [c]$.

\section{Preliminaries}\label{sec:preliminaries}
\subsection{Fisher-Rao Geometry, Replicator Dynamics}

In matrix games, players from a large population engage in two-player interactions. For simplicity, we assume that each player chooses from a constant set of $c$ strategies. The payoff for a two-player interaction is then given by a $c\times c$ payoff matrix $B$.
If players change their strategy to imitate other players with more effective strategies, the overall distribution $p \in \SM_c$ of strategies in the population changes over time according to the well-known \emph{replicator dynamics}
\begin{equation}\label{eq:replicator_base}
	\dot p(t) = \ROS_{p(t)}[Bp(t)],\qquad p(0) = p_0\in\SM_c,
\end{equation}
where 
\begin{equation}\label{eq:def-mcS}
\SM_c = \{p\in \R^c\colon \la \eins_c, p\ra = 1,\; p > 0\}
\end{equation}
denotes the relative interior of the probability simplex with $c$ vertices and 
\begin{equation}\label{eq:replicator_def}
	\ROS_{p} = \Diag(p) - pp^\top
\end{equation}
is called the \emph{replicator operator}. We regard $\SM_c$ as a Riemannian manifold with trivial tangent bundle
\begin{equation}\label{eq:trivial_tangent_bundle}
	T\SM_c \cong \SM_c \times T_0\SM_c,\qquad 
	T_0\SM_c = \{v\in \R^c\colon \la \eins_c, v\ra = 0\}
\end{equation}
and equipped with the \emph{Fisher-Rao metric}
\begin{equation}\label{eq:fisher_rao_metric}
	\la\cdot,\cdot\ra_g\colon T_p\SM_c\times T_p\SM_c\to \R,\qquad 
	(u,v) \mapsto \Big\la \frac{u}{p}, v\Big\ra.
\end{equation}
The barycenter of $\SM_c$ is denoted $\BS = \frac{1}{c}\eins_c$.
Vectors in $\R^c$ are projected onto the tangent space $T_0\SM_c$ by the linear map
\begin{equation}\label{eq:projection_def}
	\Pi_0\colon \R^c\to T_0\SM_c,\qquad v\mapsto \Pi_0v = v - \frac{1}{c}\la \eins_c, v\ra\eins_c.
\end{equation}
The manifold $\SM_c$ has dimension $c-1$. Two coordinate charts are particularly relevant to the following discussion. A point $p\in \SM_c\subseteq \R^c$ has $m$-coordinates $\mu$ with
\begin{equation}\label{eq:m_coordinates}
	p = (\mu, 1-\la \eins_{c-1}, \mu\ra),\qquad \mu\in\R^{c-1},\quad
\mu > 0,\quad
\la \mu, \eins_{c-1}\ra < 1
\end{equation}
and $e$-coordinates $\theta$ with
\begin{equation}\label{eq:e_coordinates}
	p = \frac{1}{Z(\theta)}\exp\bpm \theta \\ -\la \theta, \eins_{c-1}\ra\epm,\qquad \theta\in\R^{c-1},
\end{equation}
where $Z(\theta)$ normalizes the vector on the right-hand side such that $p\in\SM_c$, as defined by \eqref{eq:def-mcS}. The $e$-coordinates $\theta$ are unconstrained and define a \emph{global} chart for $\SM_c$.

For general references to Riemannian geometry, we refer to \citep{lee2018Riemannian, jost2017riemannian}. For references to \emph{information geometry} which underlies the above definitions, see \citep{amari2007methods, ay2017information}. 

The mean payoff in a population with state $p$ is $\la p, B p\ra$. In particular, if $B$ is symmetric $B p = \frac{1}{2}\partial_{p}\la p, B p\ra$, then it is well known that \eqref{eq:replicator_base} is the Riemannian gradient ascent flow of mean payoff with respect to the metric \eqref{eq:fisher_rao_metric}.
With an eye toward numerical computation, a useful object is the \emph{lifting map}
\begin{equation}\label{eq:lifting_map_def}
	\exp_p\colon T_0\SM_c\to \SM_c,\qquad
	\exp_p(v) = \frac{p\diamond\exp(v)}{\la p, \exp(v)\ra}.
\end{equation}
It can be shown that 
\begin{equation}\label{eq:exp-Pi}
\exp_{p} = \exp_{p}\circ \,\Pi_{0},
\end{equation}
with the projection $\Pi_{0}$ given by \eqref{eq:projection_def}, such that $\exp_{p}$ is well-defined on $\R^{c}$. Furthermore, the mapping 
\eqref{eq:lifting_map_def} is a first-order approximation to the Levi-Civita geodesics on $\SM_c$ \citep{Astroem2017}. It is also closely related to the $e$-geodesics of information geometry \citep{amari2007methods}.

\subsection{Data Labeling and Assignment Flows}\label{sec:af_prelim}
We have studied dynamics similar to \eqref{eq:replicator_base} for \emph{data labeling} on graphs. Given a graph $\Graph = (\Nodes, \Edges)$ and data on each node, the task is to infer node-wise classes. For example, in image segmentation, the graph may be a grid graph of image pixels, and pixel-wise data lives in some feature space, most basically a color space. 
Another example is node-wise classification of citation graphs such as \citep{Bollacker:1998}. Here, nodes are academic papers and edges between them denote citations. The task is to classify the topic of papers from node-wise features and citations.

In each case, graph connectivity is crucial information and a natural approach is to facilitate interaction along graph edges to inform the labeling process.
We further abstract from the raw feature space of given data by lifting to a probability simplex $\SM_c$ on each node. The resulting state lives in the product manifold 
\begin{equation}\label{eq:assignment_manifold}
	\WM = \SM_c\times \cdots \times \SM_c
\end{equation}
containing $n = |\Nodes|$ copies of $\SM_c$ which we call \emph{assignment manifold}. For $S\in \WM$, let $\ROW_S$ denote the operator which applies $\ROS_{S_i}$ on each node $i\in [n]$. Similarly, let
\begin{equation}\label{eq:def-exp-SV}
\exp_S(V),\qquad
V\in T_0\WM = (T_0\SM_c)^n
\end{equation}
denote the map which applies \eqref{eq:lifting_map_def} separately on each node, whose domain can be extended
\begin{equation}
\exp_{S}\colon\R^{n\times c}\to\mc{W}.
\end{equation}
due to \eqref{eq:exp-Pi}.
We will still call these objects \emph{replicator operator} and \emph{lifting map}, respectively. 
Likewise, the projection
\begin{equation}\label{eq:projection_Pi0}
	\Pi_0\colon\R^{n\times c}\to T_{0}\mc{W}
\end{equation}
applies separately the mapping \eqref{eq:projection_def} on each node and the barycenter of $\WM$ reads $\BW = \frac{1}{c}\eins_{n\times c}$.

By defining a payoff function
\begin{equation}\label{eq:payoff_def_W}
	F\colon \WM\to \R^{n\times c}
\end{equation}
which has the state $S_i, S_j \in \SM_c$ of nodes $i,j \in [n]$ interact exactly if $ij\in \Edges$, we have found a natural inference dynamic on $\Graph$ given by
\begin{equation}\label{eq:assignment_flow_def}
	\dot S(t) = \ROW_{S(t)}\big[F\big(S(t)\big)\big],\qquad S(0) = S_0,
\end{equation}
whose solution is called \emph{assignment flow}. 
For labeling, payoff functions are designed such that the state $S$ is driven towards an extremal point of the set $\WM$. These states unambiguously associate each node with a single class. From a game-theoretical perspective, the extremal points of $\WM$ are states in which only a single strategy is played in each population.
A more detailed overview can be found in the original work \citep{Astroem2017} and the survey \citep{Schnorr2019aa}.

\section{Embedding the Assignment Manifold}\label{sec:embedding}

In our previous work \citep{Boll:2021vb}, we showed that assignment flows can be seen as multi-population replicator dynamics. Furthermore, we introduced a preliminary formalism for embedding the state space of multi-population dynamics into a \emph{single}, much higher-dimensional \emph{meta-simplex} of joint distributions. 
Assuming again the data labeling perspective introduced in Section~\ref{sec:af_prelim}, one may enumerate all $c^n$ possible assignments of $c$ classes to $n$ graph nodes. 
This enumeration represents data labeling as a single decision between 
\begin{equation}
N = c^n
\end{equation}
alternatives which we view as pure strategies of a \emph{single} population game on the meta-simplex $\Sn$.

Here, we describe a refined version of the embedding formalism as well as several additional results, generalizing and expanding our earlier findings.
Note that the proposed meta-simplex $\Sn$ is not to be confused with the meta-simplex concept proposed by \cite{Argasinski:2006}. The latter explicitly considers the relative size of populations and has much lower dimension.

To simplify notation, we assume that agents of each population have the same number $c$ of available pure strategies. However, the following results remain valid in more general scenarios of variable strategy sets.
In addition, we index entries of vectors $\p \in \Sn$ by multi-indices $[c]^n$ as opposed to integer indices in $[c^n]$ to improve readability. The component $\gamma_{i}$ of a multi-index $\gamma \in [c]^{n}$ indexes a label $\gamma_{i}\in [c]$ at vertex $i\in [n]$.

We consider the following maps, defined componentwise by
\begin{subequations}\label{eq:central_maps}
\begin{align}
	T\colon&\mc W\to \imT\subseteq\Sn, &T(W)_\gamma :=& \prod_{i\in [n]} W_{i,\gamma_i} &&\text{for all } \gamma \in [c]^n\label{eq:T_def}\\
	Q\colon& \R^{n \times c}\to \R^N, &Q(X)_\gamma :=& \sum_{i\in [n]} X_{i,\gamma_i}&&\text{for all } \gamma \in [c]^n\label{eq:def_Q}\\
	\Margin \colon& \R^N \to \R^{n\times c}, &\Margin(x)_{ij} :=& \sum_{\gamma\in [c]^n\;:\; \gamma_i = j} x_\gamma&&\text{for all } (i, j) \in [n] \times [c].\label{eq:def_Margin}
\end{align}
\end{subequations}
The particular choice of these maps will be justified by laying out several compatibility properties which intricately link them to each other and to the geometries of $\WM$ and $\Sn$.
Specifically, 
\begin{itemize}
\item
$T$ realizes the concept of enumerating labelings in the sense that the extremal points of $\ol{\WM}$ are bijectively mapped to the extremal points of $\ol{\Sn}$. 
\item 
The restriction of $M$ to $\mc{T}$ inverts $T$ by computing node-wise marginals.
We choose the larger domain $\R^{N}$ for $M$ such that it becomes the adjoint mapping of $Q$ (ref Lemma~\ref{lem:Q_adjoint}).
\end{itemize}
\begin{theorem}[\textbf{assignment manifold embedding}]\label{prop:isometric_embedding_T}
The map $T\colon \WM\to\imT\subseteq \Sn$ is an isometric embedding of $\WM$ equipped with the product Fisher-Rao geometry, into $\Sn$ equipped with the Fisher-Rao geometry. On its image $T(\WM) =: \imT\subseteq \Sn$, the inverse is given by marginalization 
\begin{equation}\label{eq:T_inv}
	\Margin|_\imT = T^{-1} \colon \imT \to \WM.
\end{equation}
\end{theorem}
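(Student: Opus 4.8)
The plan is to verify three things in turn: that $T$ is a smooth injective immersion with the stated inverse on its image, that $T$ is a topological embedding (a homeomorphism onto $\imT$ with the subspace topology), and that it is isometric with respect to the Fisher--Rao metrics. The algebraic core is the interplay between the product map $T$ of \eqref{eq:T_def} and the marginalization map $\Margin$ of \eqref{eq:def_Margin}.

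\textbf{Step 1: $T$ is well-defined, injective, with inverse $\Margin|_\imT$.} First I would check that $T(W)\in\Sn$ for $W\in\WM$: positivity is clear since each $W_{i,\gamma_i}>0$, and $\sum_{\gamma\in[c]^n} T(W)_\gamma = \sum_{\gamma}\prod_i W_{i,\gamma_i} = \prod_i\big(\sum_{j\in[c]} W_{ij}\big) = 1$ by distributivity. Next, for the inverse, compute $\Margin(T(W))_{ij} = \sum_{\gamma:\gamma_i=j}\prod_k W_{k,\gamma_k} = W_{ij}\prod_{k\neq i}\big(\sum_{\ell} W_{k\ell}\big) = W_{ij}$, again by distributivity and the normalization on each factor. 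Hence $\Margin\circ T = \mathrm{id}_\WM$, which already gives injectivity of $T$ and identifies $\Margin|_\imT$ as $T^{-1}$; continuity of both $T$ and $\Margin|_\imT$ is immediate since both are polynomial, so $T$ is a homeomorphism onto $\imT$.

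\textbf{Step 2: $T$ is an immersion.} I would compute the differential $dT_W\colon T_0\WM\to T_{T(W)}\Sn$. Writing $V=(V_1,\dots,V_n)$ with $V_i\in T_0\SM_c$, the product rule gives
\begin{equation}
(dT_W V)_\gamma = \sum_{i\in[n]} V_{i,\gamma_i}\prod_{k\neq i} W_{k,\gamma_k}
= T(W)_\gamma \sum_{i\in[n]} \frac{V_{i,\gamma_i}}{W_{i,\gamma_i}},
\end{equation}
the last equality valid since $W>0$. Injectivity of $dT_W$ follows by applying $\Margin$ (chain rule on $\Margin\circ T=\mathrm{id}$, or directly): $\Margin(dT_W V)_{ij} = V_{ij}$ by the same distributive computation as in Step 1 together with $\sum_\ell V_{k\ell}=0$ for the off-diagonal factors. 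Since $\dim\WM = n(c-1) = \dim\imT$ by Step 1, $dT_W$ is a linear isomorphism onto $T_{T(W)}\imT$.

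\textbf{Step 3: isometry.} This is the step requiring the most care. Using $p:=T(W)$ and the tangent vector formula from Step 2, the Fisher--Rao norm on $\Sn$ of $dT_W V$ is
\begin{equation}
\Big\la \frac{dT_W V}{p}, dT_W V\Big\ra
= \sum_{\gamma\in[c]^n} p_\gamma\Big(\sum_{i\in[n]}\frac{V_{i,\gamma_i}}{W_{i,\gamma_i}}\Big)^2 .
\end{equation}
Expanding the square gives diagonal terms $\sum_i\sum_\gamma p_\gamma (V_{i,\gamma_i}/W_{i,\gamma_i})^2$ and cross terms $\sum_{i\neq k}\sum_\gamma p_\gamma (V_{i,\gamma_i}/W_{i,\gamma_i})(V_{k,\gamma_k}/W_{k,\gamma_k})$. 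For a diagonal term, summing out all coordinates of $\gamma$ except $\gamma_i$ (using $\sum_\ell W_{m\ell}=1$ for $m\neq i$) leaves $\sum_{j\in[c]} W_{ij}(V_{ij}/W_{ij})^2 = \la V_i/W_i, V_i\ra$, which is exactly the $i$-th summand of the product Fisher--Rao metric on $\WM$. For a cross term with $i\neq k$, summing out coordinates other than $\gamma_i,\gamma_k$ factors it as $\big(\sum_j W_{ij}\cdot V_{ij}/W_{ij}\big)\big(\sum_\ell W_{k\ell}\cdot V_{k\ell}/W_{k\ell}\big) = \la\eins_c,V_i\ra\la\eins_c,V_k\ra = 0$ since $V_i,V_k\in T_0\SM_c$. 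Hence only the diagonal terms survive and the expression equals $\sum_i \la V_i/W_i, V_i\ra$, the product metric. This proves $T^*(\text{Fisher--Rao on }\Sn) = $ product Fisher--Rao on $\WM$, completing the proof.

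\textbf{Main obstacle.} The conceptual content is entirely in Step 3: the vanishing of the cross terms is precisely what makes the embedding isometric rather than merely immersive, and it hinges on the tangent vectors being \emph{centered} ($\la\eins_c,V_i\ra=0$) together with the multiplicative/product structure of $p=T(W)$. I would present the marginalization-by-summation identities (Steps 1 and 2) cleanly first, since the same distributive bookkeeping is reused three times, and then the isometry computation becomes short.
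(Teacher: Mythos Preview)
Your proof is correct and takes a genuinely different, more elementary route than the paper. For the topological embedding, the paper composes $T$ as $\exp_{\eins_{\Sn}}\circ Q|_{T_0\WM}\circ\exp_{\BW}^{-1}$ via the lifting-map Lemma and then argues about topologies through $e$-coordinates and the linear subspace $\mc{Q}=Q(T_0\WM)$; you bypass this entirely by observing that $\Margin\circ T=\mathrm{id}_\WM$ with both maps polynomial already gives a continuous two-sided inverse on $\imT$. For the immersion, the paper computes $\ker dT|_W$ via the characterization of $\ker Q$ (a separate lemma), whereas you obtain injectivity of $dT_W$ in one line from the chain rule applied to $\Margin\circ T=\mathrm{id}$. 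For the isometry, the paper uses the adjoint relation $\Margin=Q^\ast$ to rewrite $g^{\Sn}_{T(W)}(dT_W U,dT_W V)=\langle \Margin\,dT_W[U],V/W\rangle$ and then invokes $\Margin\circ dT_W=\mathrm{id}$; your direct expansion of the square, with cross terms killed by $\langle\eins_c,V_i\rangle=0$, is equivalent and arguably more transparent about \emph{why} the product structure forces isometry. The paper's route has the advantage that it develops and reuses the $Q$--$\Margin$ adjointness and the lifting-map compatibility, which are needed anyway for the later embedding theorems; your route is self-contained and shorter if one only wants this theorem. One small remark: your sentence ``$\dim\WM=n(c-1)=\dim\imT$ by Step~1'' is premature as stated (Step~1 gives a homeomorphism, not yet a manifold structure on $\imT$), but it is harmless since injectivity of $dT_W$ alone suffices for the immersion claim.
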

\begin{proof}
Section \ref{proof:prop:isometric_embedding_T}.
\end{proof}

In view of the expression \eqref{eq:T_def}, it is clear that $\imT$ is precisely the set of rank-1 tensors in $\Sn \subseteq \R^N \cong (\R^c)^n$. In addition, we have the following interpretation of points $T(W) \in \imT$ within the simplex of joint distributions $\Sn$.

\vspace{0.25cm}
\begin{proposition}[\textbf{maximum entropy property}]\label{prop:max_entropy}
For every $W\in\WM$, the distribution $T(W)\in\Sn$ has maximum entropy among all $p\in \Sn$ subject to the marginal constraint $\Margin p = W$, with $\Margin$ given by \eqref{eq:def_Margin}.
\end{proposition}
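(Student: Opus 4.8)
The plan is to derive the claim from Gibbs' inequality (nonnegativity of the Kullback--Leibler divergence) together with the observation that, for a \emph{fixed} marginal $W$, the cross-entropy of any feasible $p$ relative to $T(W)$ does not depend on $p$. First I would record that $T(W)$ is itself feasible: by Theorem~\ref{prop:isometric_embedding_T}, $\Margin$ restricted to $\imT$ inverts $T$, hence $\Margin\big(T(W)\big) = W$. Moreover $T(W)\in\Sn$ has strictly positive entries, because each factor $W_i\in\SM_c$ does; consequently $\KL\big(p\,\big\|\,T(W)\big)$ is well-defined for every $p\in\overline{\Sn}$, in particular for every $p\in\Sn$ satisfying $\Margin p = W$.

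The key step is then a short computation. For any $p\in\Sn$ with $\Margin p = W$, expanding the divergence and using $T(W)_\gamma = \prod_{i\in[n]} W_{i,\gamma_i}$ gives
\[
\KL\big(p\,\big\|\,T(W)\big) \;=\; -H(p) \;-\; \sum_{\gamma\in[c]^n} p_\gamma \sum_{i\in[n]} \log W_{i,\gamma_i} \;=\; -H(p) \;-\; \sum_{i\in[n]}\sum_{j\in[c]} (\Margin p)_{ij}\,\log W_{ij},
\]
where the second equality interchanges the order of summation and collects terms according to the value $j=\gamma_i$, recognizing $\sum_{\gamma:\gamma_i=j}p_\gamma = (\Margin p)_{ij}$. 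Since $\Margin p = W$, the last sum equals $\sum_{i,j} W_{ij}\log W_{ij}$, which depends on $p$ only through the prescribed marginal. Identifying $-\sum_{i,j}W_{ij}\log W_{ij} = \sum_{i\in[n]} H(W_i)$ as the entropy $H\big(T(W)\big)$ of the rank-1 (product) distribution, we obtain the identity $H(p) = H\big(T(W)\big) - \KL\big(p\,\big\|\,T(W)\big)$. Because $\KL\big(p\,\big\|\,T(W)\big)\ge 0$ with equality exactly when $p = T(W)$, this yields $H(p)\le H\big(T(W)\big)$ for every feasible $p$, with equality only at $p = T(W)$, which is the assertion.

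I do not expect a real obstacle: the only points needing care are the bookkeeping in the change of summation order (the single-coordinate marginal of a joint distribution, which is exactly the map $\Margin$ of \eqref{eq:def_Margin}) and the evaluation of the entropy of a rank-1 tensor as the sum of its marginal entropies, both of which are routine. As an alternative route one could argue by convex duality: $H$ is strictly concave on the nonempty compact polytope $\{p\in\overline{\Sn}: \Margin p = W\}$, so a unique maximizer exists, and the stationarity conditions of the associated Lagrangian force $\log p_\gamma$ to be additively separable in the coordinates $\gamma_i$, i.e. $p$ is a rank-1 tensor; by Theorem~\ref{prop:isometric_embedding_T} the only such tensor with marginals $W$ is $T(W)$. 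I would present the divergence argument as the primary proof, since it is shorter and entirely self-contained.
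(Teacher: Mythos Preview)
Your argument is correct and arrives at the same key identity as the paper, namely $H(p) = H\big(T(W)\big) - \KL\big(p\,\|\,T(W)\big)$, but you reach it by a more elementary route. You compute the cross-entropy $\langle p,\log T(W)\rangle$ directly from the product structure of $T(W)$ and the definition of $\Margin$, so the marginal constraint immediately collapses the cross-entropy to a quantity depending only on $W$. The paper instead obtains the same identity via the Pythagorean relation of information geometry: it verifies that the feasible set $\{p\in\Sn:\Margin p=W\}$ is m-flat, that $\imT$ is e-flat (using the Lifting Map Lemma), and that the two submanifolds meet orthogonally at $T(W)$ (using the closed form for $dT$), then applies the generalized Pythagoras theorem with the barycenter $\eins_{\Sn}$ as reference point. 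Your proof is shorter and self-contained; the paper's proof, while heavier, makes explicit the information-geometric picture (dually flat submanifolds intersecting orthogonally) that underlies the result and connects it to the other machinery developed in the paper.
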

\begin{proof}
Section \ref{proof:prop:max_entropy}.
\end{proof}

In general, each collection of marginal distributions $S\in\WM$ has (infinitely) many possible joint distributions. Proposition~\ref{prop:max_entropy} shows that $T$ precisely selects the least informative one among them. This situation is illustrated in Figure~\ref{fig:marginalization}.
\begin{figure}[ht]
\centering
\begin{subfigure}[b]{0.45\textwidth}
	\centering
	\includegraphics[width=\textwidth]{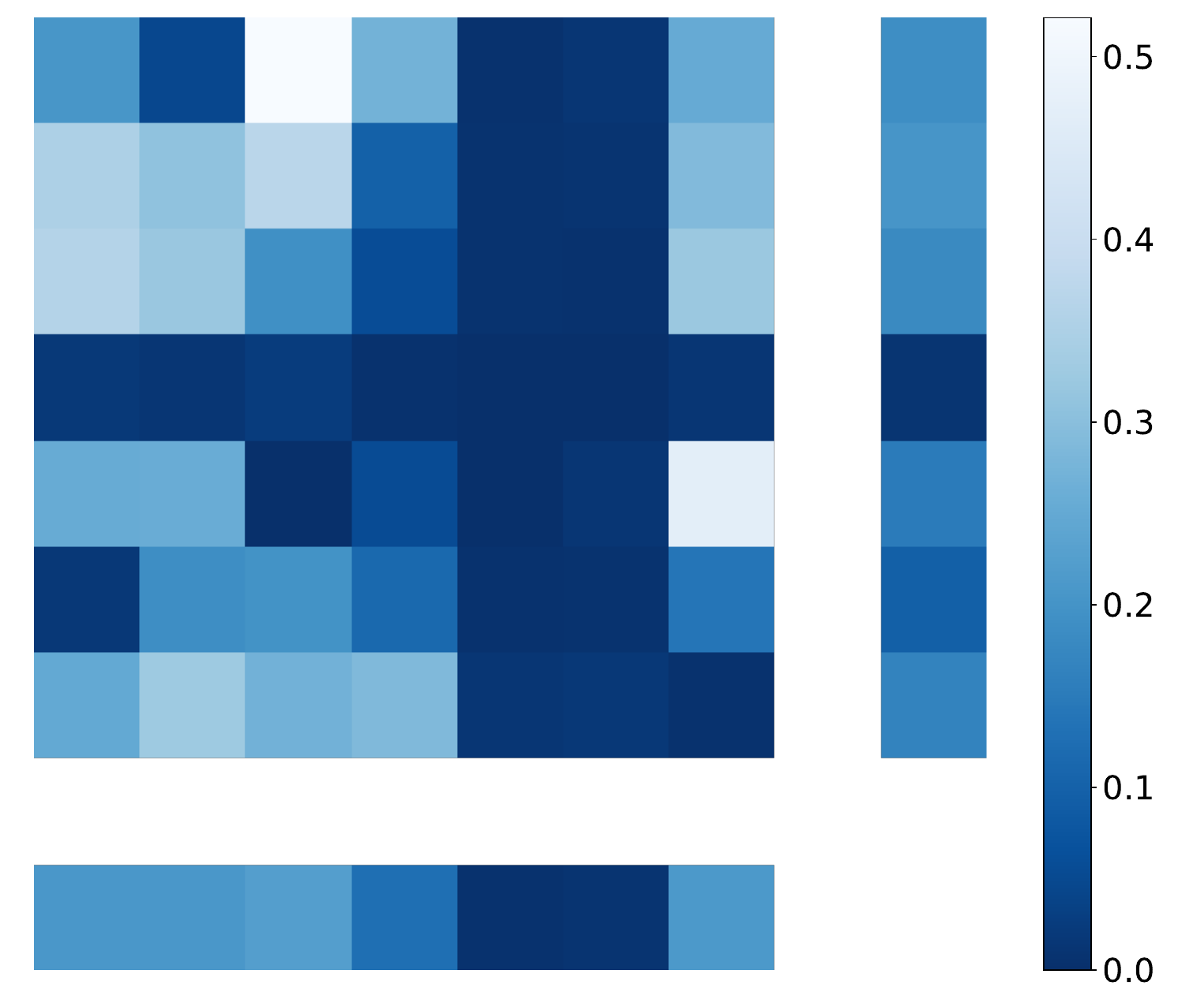}
	\caption{A randomly generated joint distribution of $S_1$ and $S_2$.}
	\label{fig:marginalization_random}
\end{subfigure}
\hfill
\begin{subfigure}[b]{0.45\textwidth}
	\centering
	\includegraphics[width=\textwidth]{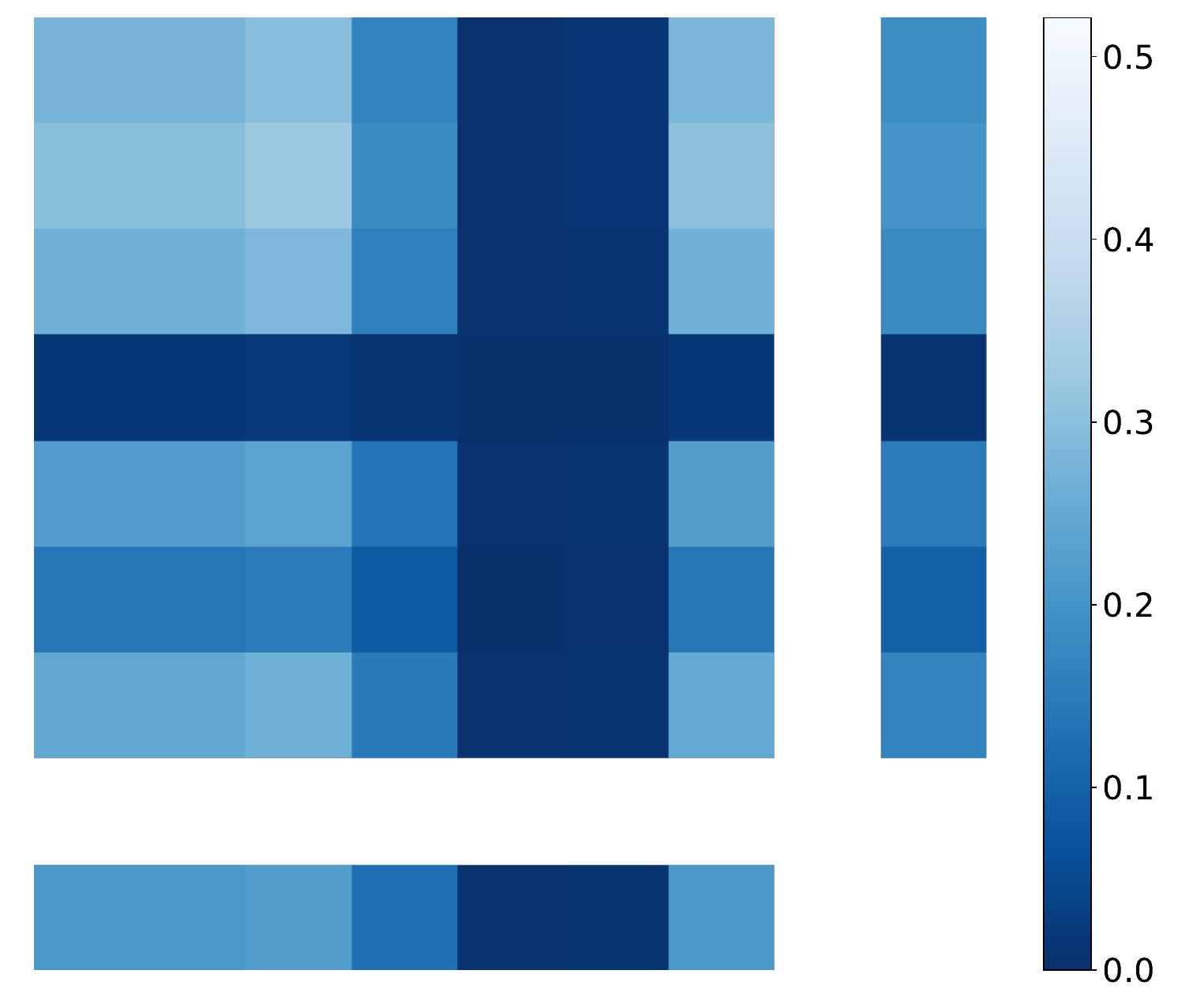}
	\caption{The maximum-entropy joint distribution $T(S)$ of $S_1$ and $S_2$.}
	\label{fig:marginalization_T}
\end{subfigure}
\caption{Marginals distributions $S = (S_1, S_2)$ and two possible conforming joint distributions. Joint distribution values are scaled by a factor of $c$ for visual clarity.}\label{fig:marginalization}
\end{figure}

Theorem \ref{prop:isometric_embedding_T} expresses an intricate relationship between the product Fisher-Rao geometry of $\WM$ and the Fisher-Rao geometry of $\Sn$. A similar compatibility is found between the lifting map \eqref{eq:lifting_map_def} on $\WM$ and its analog on $\Sn$.

\vspace{0.25cm}
\begin{lemma}[\textbf{Lifting Map Lemma}]\label{lem:lifting_map}
	Let $S\in \WM$ and $V\in \R^{n\times c}$. Then the mappings $T, Q$ given by \eqref{eq:central_maps} satisfy
	\begin{equation}\label{eq:lifting_map_delta}
		T\big(\exp_{S}(V)\big) = \exp_{T(S)}\big(Q(V)\big),
	\end{equation}
where $\exp_{S}$ on the left is given by \eqref{eq:def-exp-SV} and $\exp_{T(S)}$ on the right naturally extends the mapping \eqref{eq:lifting_map_def}.
\end{lemma}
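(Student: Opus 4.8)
The plan is to verify the identity componentwise by direct computation, comparing the $\gamma$-entry of each side for $\gamma\in[c]^n$. Recall that $\exp_S(V)$ acts node-wise via \eqref{eq:lifting_map_def}, so for each node $i$ we have $\big(\exp_S(V)\big)_{i,\gamma_i} = \frac{S_{i,\gamma_i}\exp(V_{i,\gamma_i})}{\la S_i, \exp(V_i)\ra}$. Applying $T$ and using \eqref{eq:T_def}, the left-hand side becomes
\begin{equation}\label{eq:lifting_lhs}
  T\big(\exp_S(V)\big)_\gamma
  = \prod_{i\in[n]} \frac{S_{i,\gamma_i}\exp(V_{i,\gamma_i})}{\la S_i, \exp(V_i)\ra}
  = \frac{\Big(\prod_{i\in[n]} S_{i,\gamma_i}\Big)\exp\big(\sum_{i\in[n]} V_{i,\gamma_i}\big)}{\prod_{i\in[n]}\la S_i,\exp(V_i)\ra}.
\end{equation}
The numerator is exactly $T(S)_\gamma \exp\big(Q(V)_\gamma\big)$ by the definitions \eqref{eq:T_def} and \eqref{eq:def_Q}. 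So the remaining task is to identify the denominator $\prod_{i\in[n]}\la S_i,\exp(V_i)\ra$ with the normalization constant $\la T(S), \exp(Q(V))\ra$ appearing in $\exp_{T(S)}\big(Q(V)\big)$.

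Next I would expand that normalization constant: by \eqref{eq:lifting_map_def} applied on $\Sn$,
\begin{equation}\label{eq:lifting_norm}
  \la T(S), \exp(Q(V))\ra
  = \sum_{\gamma\in[c]^n} T(S)_\gamma \exp\big(Q(V)_\gamma\big)
  = \sum_{\gamma\in[c]^n} \prod_{i\in[n]} S_{i,\gamma_i}\exp(V_{i,\gamma_i}),
\end{equation}
and the key algebraic fact is that this sum over the product index set $[c]^n$ factors as $\prod_{i\in[n]}\big(\sum_{j\in[c]} S_{i,j}\exp(V_{i,j})\big) = \prod_{i\in[n]}\la S_i,\exp(V_i)\ra$ — this is just the distributive law (a generalized Fubini/product expansion), and it is the one place where the rank-1 tensor structure of $T(S)$ is essential. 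Combining \eqref{eq:lifting_lhs}, \eqref{eq:lifting_norm} and this factorization yields
\begin{equation}\label{eq:lifting_rhs}
  T\big(\exp_S(V)\big)_\gamma
  = \frac{T(S)_\gamma \exp\big(Q(V)_\gamma\big)}{\la T(S),\exp(Q(V))\ra}
  = \exp_{T(S)}\big(Q(V)\big)_\gamma,
\end{equation}
which is the claim. I would also note that the extended domain of $\exp_{T(S)}$ on $\R^N$ is consistent here via \eqref{eq:exp-Pi}: since $Q(V)\in\R^N$ need not lie in $T_0\Sn$, one uses that $\exp_{T(S)} = \exp_{T(S)}\circ\Pi_0$, but the computation above never needs this because it works directly with the unprojected formula \eqref{eq:lifting_map_def}.

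There is no serious obstacle — the proof is essentially the distributive-law factorization of \eqref{eq:lifting_norm}. The only points requiring a little care are bookkeeping of the multi-index notation and making sure the denominators are nonzero, which holds because $S\in\WM$ has strictly positive entries and exponentials are positive, so each $\la S_i,\exp(V_i)\ra > 0$ and hence their product (equivalently the normalization on $\Sn$) is positive; this also confirms that $T\big(\exp_S(V)\big)\in\imT\subseteq\Sn$, consistent with Theorem~\ref{prop:isometric_embedding_T}.
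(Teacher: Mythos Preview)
Your proof is correct and follows essentially the same componentwise computation as the paper; the only organizational difference is that the paper avoids the explicit factorization \eqref{eq:lifting_norm} of the normalization constant by arguing via proportionality (showing $T(\exp_S(V)) \propto T(S)\diamond\exp(Q(V)) \propto \exp_{T(S)}(Q(V))$ and then using that both endpoints lie in $\Sn$), whereas you compute the normalizer directly and match it. Both rely on the same two algebraic facts --- that a product of exponentials is the exponential of the sum, and the distributive-law factorization of a sum over $[c]^n$ --- so there is no substantive difference.
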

\begin{proof}
Section \ref{proof:lem:lifting_map}.
\end{proof}

We will also frequently use the following useful identity connecting $Q$ to marginalization.
\begin{lemma}[\textbf{$Q$ Adjoint Lemma}]\label{lem:Q_adjoint}
$\Margin$ and $Q$ given by given by \eqref{eq:central_maps} are adjoint linear maps with respect to the standard inner product, i.e. for each $p\in \R^N$ and each $V\in \R^{n\times c}$ it holds that
\begin{equation}\label{eq:Q_adjoint}
  \la p, Q(V)\ra = \la Mp, V\ra.
\end{equation}
\end{lemma}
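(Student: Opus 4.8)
The claim is a purely linear-algebraic identity, so the plan is to expand both sides by their componentwise definitions \eqref{eq:def_Q} and \eqref{eq:def_Margin} and exchange the order of summation. Concretely, I would write
\[
  \la p, Q(V)\ra = \sum_{\gamma\in[c]^n} p_\gamma\, Q(V)_\gamma = \sum_{\gamma\in[c]^n} p_\gamma \sum_{i\in[n]} V_{i,\gamma_i}
  = \sum_{i\in[n]} \sum_{\gamma\in[c]^n} p_\gamma\, V_{i,\gamma_i}.
\]
For each fixed $i\in[n]$, the inner sum over all multi-indices $\gamma$ can be reorganized by first fixing the value $j := \gamma_i\in[c]$ at coordinate $i$ and then summing over the remaining coordinates; since $V_{i,\gamma_i}=V_{i,j}$ does not depend on those remaining coordinates, it factors out, leaving $\sum_{j\in[c]} V_{i,j}\big(\sum_{\gamma:\gamma_i=j} p_\gamma\big) = \sum_{j\in[c]} V_{i,j}\,\Margin(p)_{ij}$ by the definition \eqref{eq:def_Margin} of $\Margin$.

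Putting this back together gives $\la p, Q(V)\ra = \sum_{i\in[n]}\sum_{j\in[c]} \Margin(p)_{ij} V_{i,j} = \la \Margin p, V\ra$, which is exactly \eqref{eq:Q_adjoint}; here the last equality is just the definition of the Frobenius inner product on $\R^{n\times c}$. Both $Q$ and $\Margin$ are manifestly linear in their arguments, so this identity for all $p$ and $V$ is precisely the statement that they are mutually adjoint.

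There is no real obstacle here — the only thing to be careful about is the bookkeeping in the re-indexing step, namely that the family of sets $\{\gamma\in[c]^n : \gamma_i = j\}$ for $j\in[c]$ partitions $[c]^n$ for each fixed $i$, which justifies splitting the sum over $\gamma$ into a double sum over $j$ and over $\gamma$ with $\gamma_i=j$. I would state that partition fact explicitly and then the computation above is a two-line chain of equalities.
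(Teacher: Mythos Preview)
Your proposal is correct and follows essentially the same approach as the paper: both expand $\la p, Q(V)\ra$ via the componentwise definition of $Q$, swap the order of summation, partition the sum over $\gamma$ according to the value of $\gamma_i$, and recognize the resulting inner sum as $\Margin(p)_{ij}$ from \eqref{eq:def_Margin}. The only difference is cosmetic (index names), and your explicit remark that the sets $\{\gamma:\gamma_i=j\}$ partition $[c]^n$ makes the bookkeeping even clearer.
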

\begin{proof}
  Let $\p\in \R^{N}$ and $V\in\R^{n\times c}$, then
  \begin{subequations}\label{eq:inner_product_proof_off}
  \begin{align}
    \la \p, Q(V)\ra &= \sum_{\gamma\in [c]^n} \p_\gamma Q(V)_\gamma
    = \sum_{\gamma\in [c]^n} \p_\gamma \sum_{l\in [n]} V_{l,\gamma_l}
    = \sum_{l\in [n]}\sum_{j\in [c]}\sum_{\gamma_l = j} \p_\gamma V_{l,\gamma_l}\label{eq:inner_product_proof_off3}\\
    &= \sum_{l\in [n]}\sum_{j\in [c]} V_{l,j}\sum_{\gamma_l = j} \p_\gamma
    \stackrel{\eqref{eq:def_Margin}}{=} \sum_{l\in [n]}\sum_{j\in [c]} V_{l,j} (\Margin \p)_{l,j}\label{eq:inner_product_proof_off5}\\
    &= \la \Margin \p, V\ra.
    \label{eq:inner_product_proof_off6}
  \end{align}
  \end{subequations}
\end{proof}

Our main result stated next is that the embedding $T\colon \WM\to \Sn$ maps \textit{multi}-population replicator dynamics on $\WM$ to \textit{single}-population replicator dynamics on $\Sn$ by a transformation of payoff functions. This generalizes our earlier finding \citep{Boll:2021vb} to arbitrary nonlinear payoffs.

\vspace{0.25cm}
\begin{theorem}[\textbf{Multi-Population Embedding Theorem}]\label{theorem:af_embedding}
For any payoff function $F\colon \WM\to \R^{n\times c}$, the multi-population replicator dynamics
\begin{equation}\label{eq:general_af}
	\dot W = \ROW_W[F(W)], \quad W(0) = W_0
\end{equation}
on $\WM$ is pushed forward by $T$ to the replicator dynamics
\begin{equation}\label{eq:replicator_ms}
	\dot p(t) = \ROS_{p(t)}\wh F\big(p(t)\big), \quad p(0) = T(W_0), \qquad \wh F = Q \circ F \circ \Margin,
\end{equation}
on $\Sn$ and the map $T$ satisfies
\begin{equation}\label{eq:dT_ReplOp_main}
  dT|_W[\ROW_W[X]] = R_{T(W)} Q[X], \quad \text{for all } X \in \R^{n \times c} \text{ and } W\in \WM.
\end{equation}
\end{theorem}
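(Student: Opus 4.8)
The plan is to first establish the pointwise differential identity~\eqref{eq:dT_ReplOp_main} and then read off the pushforward of the flow from it, using the inversion formula~\eqref{eq:T_inv} of Theorem~\ref{prop:isometric_embedding_T}. The fact driving~\eqref{eq:dT_ReplOp_main} is that the replicator operator is the differential of the lifting map at the base point: differentiating~\eqref{eq:lifting_map_def} at $v=0$ gives $d(\exp_{p})|_{0}[v]=p\diamond v-p\la p,v\ra=\ROS_{p}[v]$, and since $\ROS_{p}\eins_{c}=0$ this stays valid for all $v\in\R^{c}$ in accordance with~\eqref{eq:exp-Pi}; applied node-wise this reads $d(\exp_{S})|_{0}[X]=\ROW_{S}[X]$ for $X\in\R^{n\times c}$, and likewise $d(\exp_{T(S)})|_{0}[y]=\ROS_{T(S)}[y]$ for $y\in\R^{N}$. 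I would then differentiate the identity~\eqref{eq:lifting_map_delta} of Lemma~\ref{lem:lifting_map} with respect to $V$ at $V=0$ in an arbitrary direction $X\in\R^{n\times c}$: by the chain rule the left-hand side becomes $dT|_{S}\big[d(\exp_{S})|_{0}[X]\big]=dT|_{S}\big[\ROW_{S}[X]\big]$, while the right-hand side, since $Q$ is linear, becomes $d(\exp_{T(S)})|_{0}[Q(X)]=\ROS_{T(S)}[Q(X)]$; equating the two is exactly~\eqref{eq:dT_ReplOp_main}.

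As an independent cross-check I would verify~\eqref{eq:dT_ReplOp_main} by direct computation: the product rule applied to~\eqref{eq:T_def} gives, for $W\in\WM$, $dT|_{W}[Y]_{\gamma}=T(W)_{\gamma}\sum_{k\in[n]}Y_{k,\gamma_{k}}/W_{k,\gamma_{k}}$; substituting $Y=\ROW_{W}[X]$ and using $(\ROW_{W}[X])_{k,\gamma_{k}}/W_{k,\gamma_{k}}=X_{k,\gamma_{k}}-\la W_{k},X_{k}\ra$ (with $W_{k},X_{k}\in\R^{c}$ the $k$-th rows) yields $dT|_{W}[\ROW_{W}[X]]_{\gamma}=T(W)_{\gamma}\sum_{k}(X_{k,\gamma_{k}}-\la W_{k},X_{k}\ra)$. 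On the other side $\ROS_{T(W)}Q[X]_{\gamma}=T(W)_{\gamma}\big(Q(X)_{\gamma}-\la T(W),Q(X)\ra\big)$ with $Q(X)_{\gamma}=\sum_{k}X_{k,\gamma_{k}}$ by~\eqref{eq:def_Q} and $\la T(W),Q(X)\ra=\la \Margin T(W),X\ra=\la W,X\ra=\sum_{k}\la W_{k},X_{k}\ra$ by Lemma~\ref{lem:Q_adjoint} and~\eqref{eq:T_inv}; the two expressions coincide.

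For the pushforward, I would let $W(\cdot)$ solve~\eqref{eq:general_af} and set $p(t):=T(W(t))$, so that $p(0)=T(W_{0})$ and $p(t)\in\imT$ for all $t$. Differentiating, using~\eqref{eq:general_af} and then~\eqref{eq:dT_ReplOp_main} with $X=F(W(t))$,
\[
\dot p(t)=dT|_{W(t)}\big[\ROW_{W(t)}[F(W(t))]\big]=\ROS_{T(W(t))}Q\big[F(W(t))\big]=\ROS_{p(t)}Q\big[F(W(t))\big].
\]
Since $p(t)\in\imT$, Theorem~\ref{prop:isometric_embedding_T} gives $W(t)=\Margin(p(t))$, whence $F(W(t))=(F\circ\Margin)(p(t))$ and $\dot p(t)=\ROS_{p(t)}(Q\circ F\circ\Margin)(p(t))=\ROS_{p(t)}\wh F(p(t))$, i.e.\ $p(\cdot)$ solves~\eqref{eq:replicator_ms}. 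Conversely, $T$ is a diffeomorphism onto the embedded submanifold $\imT$ (Theorem~\ref{prop:isometric_embedding_T}) and, by~\eqref{eq:dT_ReplOp_main}, the right-hand side of~\eqref{eq:replicator_ms} is tangent to $\imT$ along $\imT$; hence by uniqueness of solutions the trajectory of~\eqref{eq:replicator_ms} started at $T(W_{0})$ stays in $\imT$ and equals $T(W(\cdot))$, which is the asserted pushforward.

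The only genuine obstacle I anticipate is the first step: identifying $\ROS_{p}$ with $d(\exp_{p})|_{0}$ and carefully tracking the extensions of $\exp$ and of $dT$ from the tangent spaces $T_{0}\WM$, $T_{0}\Sn$ to the ambient spaces $\R^{n\times c}$, $\R^{N}$, so that~\eqref{eq:dT_ReplOp_main} holds for \emph{every} $X\in\R^{n\times c}$ rather than only for tangent directions. Once the Lifting Map Lemma has been differentiated, the rest is chain-rule and $T^{-1}$-bookkeeping; the direct-computation route instead hinges on the multi-index manipulation and the adjoint identity of Lemma~\ref{lem:Q_adjoint}.
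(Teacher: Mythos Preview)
Your proposal is correct. Your primary route to~\eqref{eq:dT_ReplOp_main} --- differentiating the Lifting Map Lemma~\ref{lem:lifting_map} at $V=0$ via the identification $d(\exp_{p})|_{0}=\ROS_{p}$ --- is genuinely different from the paper's. The paper instead proceeds by direct computation, which is essentially your cross-check: it first records the closed-form differential $dT|_{W}[V]_{\gamma}=T(W)_{\gamma}\sum_{k}V_{k,\gamma_{k}}/W_{k,\gamma_{k}}$ as a standalone lemma, then substitutes $V=\ROW_{W}[X]$ and simplifies using Lemma~\ref{lem:Q_adjoint} together with~\eqref{eq:T_inv}. Your approach is cleaner and makes the geometric origin of the identity transparent, bypassing the multi-index bookkeeping; the paper's route is more elementary but mechanical. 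The pushforward step (set $p(t)=T(W(t))$, chain rule, replace $W(t)$ by $\Margin p(t)$) is identical in both, though the paper omits your invariance/uniqueness remark. The obstacle you anticipate is not genuine: since Lemma~\ref{lem:lifting_map} already holds on all of $\R^{n\times c}$ and $\ROW_{W}[X]\in T_{0}\WM$ for every $X\in\R^{n\times c}$, the composite $dT|_{S}\circ d(\exp_{S})|_{0}$ is automatically well-defined with domain $\R^{n\times c}$, and~\eqref{eq:dT_ReplOp_main} for arbitrary $X$ falls out directly.
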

\begin{proof}
Section \ref{proof:theorem:af_embedding}.
\end{proof}

Intuitively, the structure of $\wh F$ in \eqref{eq:replicator_ms} can be seen as follows.
The joint population state $\p\in \Sn$ is first marginalized and payoff $F(\Margin \p)$ is computed from the marginal multi-population state. Theorem~\ref{theorem:af_embedding} now shows that when multi-population state $W\in \WM$ is seen as factorizing joint population state $\p\in \Sn$ according to $\p = T(W)$, then the payoff gained in state $W$ is transformed by $Q$ to induce replicator dynamics of the joint population state.

In the following, leading examples will be matrix games, i.e. linear payoff functions that model two-player interactions. Note, however, that Theorem~\ref{theorem:af_embedding} applies to arbitrary nonlinear payoff functions, including multi-player interactions.

\begin{figure}[ht]
\centering
\includegraphics[width=.6\textwidth]{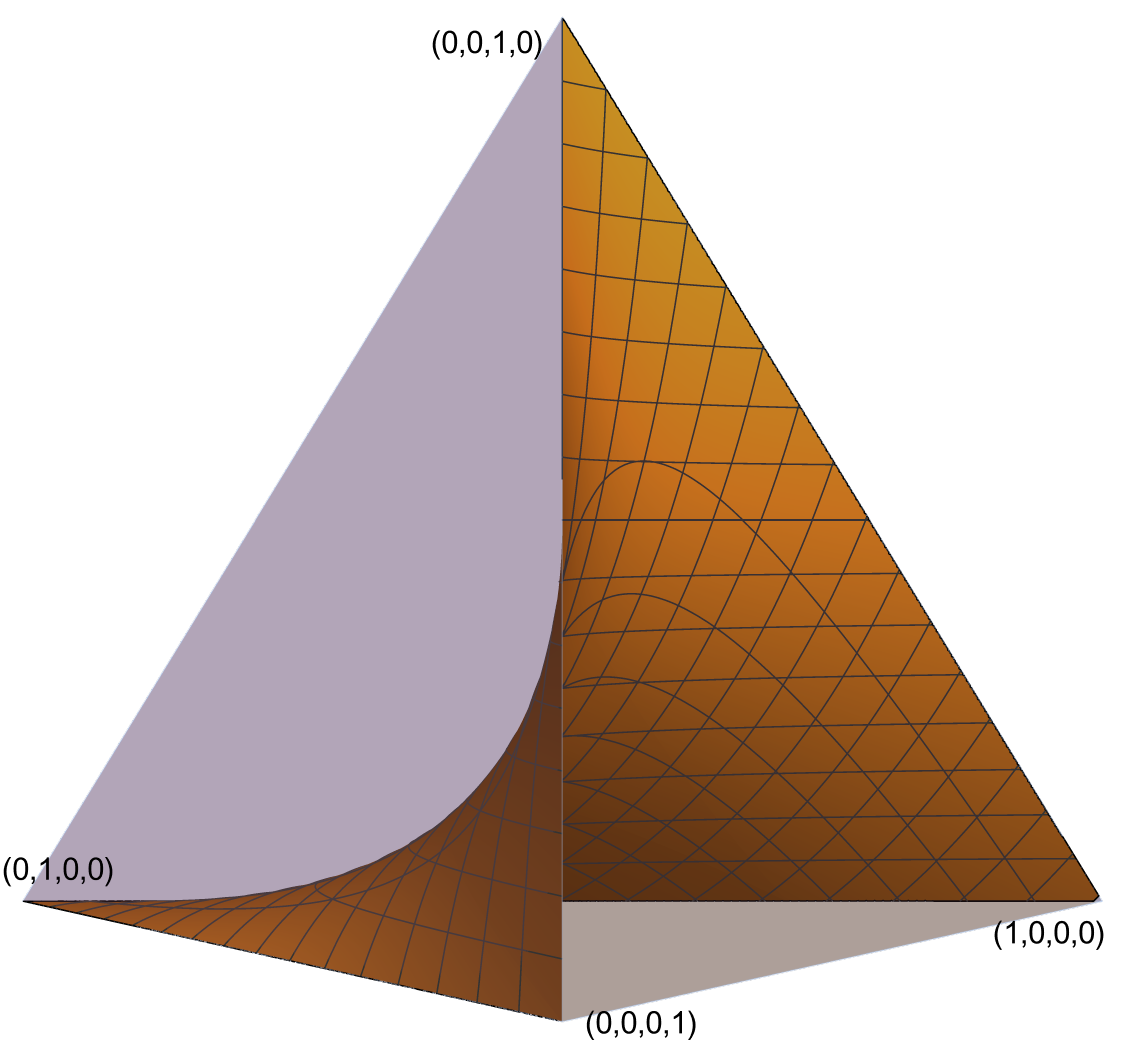}
\caption{The embedded submanifold $\imT\subseteq \Sn,\, N=4$. For two marginal distributions, this is known as the \emph{Wright manifold} \citep[Section 18.8]{Hofbauer:1998}, \citep{Chamberland:2000}.}\label{fig:wright_manifold}
\end{figure}

\section{Multiple Populations and Multiple Games}\label{sec:multi_games}
Because both $\Margin$ and $Q$ are linear operators, generalized matrix games on multiple populations reduce to simple matrix games of the joint population state exactly if the payoff $F$ is a linear function of the multi-population state. Here, we give two examples of multi-population games, one from the assignment flow literature and one from game theory. To this end, denote by 
\begin{equation}\label{eq:s-vec-S}
s := \vvec_{\text{row}}(S)\in \R^{nc}
\end{equation}
a vectorized multi-population state which contains all entries of $W\in \WM\subseteq \R^{n\times c}$ stacked row-wise. Table \ref{tab:payoff_structure} summarizes the scenarios discussed in the following.

\begin{table}[h]
\centering
\caption{Structure of payoff \eqref{eq:arbitrary_payoff_matrix} for simple instances of different games.}\label{tab:payoff_structure}
\begin{tabular}{@{}llll@{}}
\toprule
           & S-Flow                 & EGN                & Multi-Game        \\ \midrule
Payoff     & $\ol{A} = \Omega \otimes \II_c$ & $ \ol{A} = \Omega \otimes B$ & $\ol{A} = \II_n \otimes B$ \\ \bottomrule
\end{tabular}
\end{table}

\emph{S-flows} \citep{Savarino2019ab} define payoff by averaging the state $S$ according to a weighted graph adjacency matrix $\Omega \in \R^{n\times n}$. The resulting assignment flow with vectorized state \eqref{eq:s-vec-S} reads
\begin{equation}\label{eq:vec_S_field}
	\dot s(t) = \ROW_{s(t)} [(\Omega \otimes \II_c) s(t)],\qquad s(0) = s_0.
\end{equation}
This dynamical system promotes similarity of adjacent populations. Depending on the initialization $s_0$, `pockets of consensus' are formed. It has also been shown that these dynamics converge to extremal points of $\WM$ for almost all initializations under weak conditions \citep{Zern:2020aa}.\\[1em]
\emph{Evolutionary Games on Networks (EGN)} \citep{Madeo:2014, Iacobelli:2016} are dynamics which generalize \eqref{eq:vec_S_field} by incorporating payoff matrices for games played between players of adjacent populations. In the simplest case, all such games have a constant payoff matrix $B\in \R^{c\times c}$. Then, the multi-population replicator dynamics of EGN read
\begin{equation}\label{eq:vec_egn_field}
	\dot s(t) = \ROW_{s(t)} [(\Omega \otimes B^\top) s(t)],\qquad s(0) = s_0.
\end{equation}
Both \eqref{eq:vec_S_field} and \eqref{eq:vec_egn_field} have a linear (in the vectorized state $s$) payoff function. Let 
\begin{equation}\label{eq:arbitrary_payoff_matrix}
	\ol{A}\in \R^{nc\times nc}
\end{equation}
be an arbitrary payoff matrix for the vectorized state. Then by Lemma~\ref{lem:Q_adjoint} and Theorem~\ref{theorem:af_embedding}, the embedded dynamics in $\imT\subseteq \Sn$ read
\begin{equation}\label{eq:emedded_lin_dyn}
	\dot p(t) = R_{p(t)}[Q \ol{A} Q^\top p(t)],\qquad p(0) = T(s_0).
\end{equation}
The \emph{multi-game dynamics} of \cite{Hashimoto:2006} can also be written as a matrix game in $\Sn$. Given matrices $A^{(i)}\in \R^{c\times c}$, $i\in [n]$, it reads
\begin{equation}\label{eq:hashimoto-replicator}
	\dot p(t) = R_{p(t)}[A p(t)],\qquad p(0) = p_0,\qquad A_{\alpha, \beta} = \sum_{i\in [n]} A^{(i)}_{\alpha_i, \beta_i}.
\end{equation}
The structure of this payoff matrix has a natural shape within our formalism, too.

\vspace{0.25cm}
\begin{lemma}\label{lem:hashimoto_blockdiag}
The payoff matrix in \eqref{eq:hashimoto-replicator} can be written as $A = Q \ol{A} Q^\top$ where $\ol{A}$ denotes the block diagonal matrix with diagonal blocks $A^{(i)}$.
\end{lemma}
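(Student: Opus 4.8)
The statement is a purely linear-algebraic identity, so the plan is to unwind the definitions of $Q$, $\Margin$, and the block-diagonal $\ol A$ and verify that the $(\alpha,\beta)$ entry of $Q\ol A Q^\T$ equals $\sum_{i\in[n]} A^{(i)}_{\alpha_i,\beta_i}$. First I would fix the identification $\R^{nc}\cong\R^{n\times c}$ used implicitly in \eqref{eq:arbitrary_payoff_matrix}: since $\ol A$ acts on the row-vectorized state $s = \vvec_{\mathrm{row}}(S)$, writing $\ol A$ in block form with blocks $\ol A^{(i,k)}\in\R^{c\times c}$, the block-diagonal hypothesis means $\ol A^{(i,k)} = \delta_{ik}A^{(i)}$. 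Here $Q\colon\R^{n\times c}\to\R^N$ and $Q^\T = \Margin$ by Lemma~\ref{lem:Q_adjoint}, so $Q\ol A Q^\T = Q\,\ol A\,\Margin$, and I want to compute this composite acting on a standard basis vector $e_\beta$ of $\R^N$ (indexed by $\beta\in[c]^n$), then read off its $\alpha$-component.

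The key computation proceeds in three short steps. Step one: $\Margin e_\beta\in\R^{n\times c}$ has entries $(\Margin e_\beta)_{k,j} = \sum_{\gamma:\gamma_k=j}(e_\beta)_\gamma = [\beta_k = j]$; that is, the $k$-th row of $\Margin e_\beta$ is the indicator (unit) vector $\mathbf e_{\beta_k}\in\R^c$. Step two: apply $\ol A$ in block-diagonal form. Since $\ol A$ is block diagonal with blocks $A^{(i)}$ acting on the $i$-th row, $(\ol A\,\Margin e_\beta)_{i,\bullet} = A^{(i)}\,\mathbf e_{\beta_i}$, whose $j$-th component is $A^{(i)}_{j,\beta_i}$. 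Step three: apply $Q$ via \eqref{eq:def_Q}: for any $X\in\R^{n\times c}$, $Q(X)_\alpha = \sum_{i\in[n]} X_{i,\alpha_i}$, so
\begin{equation}
  \big(Q\ol A Q^\T e_\beta\big)_\alpha = Q\big(\ol A\,\Margin e_\beta\big)_\alpha = \sum_{i\in[n]} \big(\ol A\,\Margin e_\beta\big)_{i,\alpha_i} = \sum_{i\in[n]} A^{(i)}_{\alpha_i,\beta_i},
\end{equation}
which is exactly $A_{\alpha,\beta}$ from \eqref{eq:hashimoto-replicator}. Since $\beta$ was an arbitrary basis vector and the matrices agree entrywise, $A = Q\ol A Q^\T$.

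Alternatively, and perhaps more transparently, one can avoid bases entirely: write $Q = \sum_{i\in[n]} (\eins_{[i]}$-type selector$)$ — more precisely note from \eqref{eq:def_Q} that $Q(X)_\gamma = \sum_i X_{i,\gamma_i}$ exhibits $Q$ as the "sum over coordinates of the Segre-type index map", and dually $\Margin$ is the coordinatewise marginal; then $Q\ol A\Margin$ decomposes as $\sum_{i,k}$ over block pairs, each term being $(\text{lift from node }k)\circ \ol A^{(i,k)}\circ(\text{sum into node }i)$, and block-diagonality kills all $i\ne k$ terms, leaving $\sum_i$ of the single-node contributions $A^{(i)}$ evaluated at $(\alpha_i,\beta_i)$. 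I do not anticipate a genuine obstacle here; the only thing requiring care is bookkeeping the row-vectorization convention so that "block diagonal" is interpreted consistently with how $\ol A$ acts on $s$, and making sure the $Q$/$\Margin = Q^\T$ adjointness from Lemma~\ref{lem:Q_adjoint} is invoked to replace $Q^\T$ by $\Margin$ before computing.
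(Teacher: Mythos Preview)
Your proposal is correct and follows essentially the same approach as the paper: both compute the $(\alpha,\beta)$ entry of $Q\ol A Q^\top$ by recognising that $Q^\top e_\beta$ (equivalently $\Margin e_\beta$) has $i$-th row $e_{\beta_i}$, applying the block-diagonal $\ol A$ rowwise, and summing via the definition of $Q$. The paper's proof is just a more compressed version, writing $(Q\ol A Q^\top)_{\alpha,\beta}=\la Q^\top e_\alpha,\ol A Q^\top e_\beta\ra=\sum_{i\in[n]}\la e_{\alpha_i},A^{(i)}e_{\beta_i}\ra$ in one line; your three-step breakdown spells out the same computation.
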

\begin{proof}
\begin{subequations}
\begin{align}
(Q\ol{A}Q^\top)_{\alpha,\beta}
	&= \la e_\alpha, Q\ol{A}Q^\top e_\beta\ra
	= \la Q^\top e_\alpha, \ol{A}Q^\top e_\beta\ra\\
	&= \sum_{i\in [n]} \la e_{\alpha_i}, A^{(i)} e_{\beta_i}\ra
	= \sum_{i\in [n]} A^{(i)}_{\alpha_i,\beta_i}
\end{align}
\end{subequations}
\end{proof}
In particular, if all single-game payoff submatrices are the same $A^{(i)} = B \in \R^{c\times c}$, then multi-game dynamics have payoff $\ol{A} = \II_n \otimes B$.

It was shown by \cite{Hashimoto:2006} that the multi-game dynamics \eqref{eq:hashimoto-replicator} do not generally decompose into individual single-game dynamics, unless the initialization is on the \emph{Wright manifold} (see Figure~\ref{fig:wright_manifold}). The set $\imT\subseteq \Sn$ defined by \eqref{eq:T_def} is a generalization of the Wright manifold for $n > 2$ and Theorem~\ref{theorem:af_embedding} generalizes the decomposition of multi-game dynamics to more than two populations. For $p(0)\in \imT$, the dynamics \eqref{eq:hashimoto-replicator} is the embedded dynamics of
\begin{equation}\label{eq:hashimoto-decomposed}
	\dot s(t) = \ROW_{s(t)}[\ol{A}s(t)],\qquad s(0) = \Margin p(0)
\end{equation}
by Lemma~\ref{lem:hashimoto_blockdiag} and Theorem~\ref{theorem:af_embedding}. Since $\ol{A}$ is block diagonal, \eqref{eq:hashimoto-decomposed} is a collection of non-interacting single-game replicator dynamics
\begin{equation}\label{eq:hashimoto-decomposed-i}
	\dot W_i(t) = R_{W_i(t)}[A^{(i)}W_i(t)],\qquad W_i(0) = (\Margin p(0))_i,\qquad i\in [n]
\end{equation}
in accordance with the findings of \cite{Hashimoto:2006} for the specific case $n = 2$.

\section{Tangent Space Parameterization}\label{sec:tangent_parameterization}

Multi-population replicator dynamics evolve in the \emph{curved} space $\WM$ and the usual parameterization in $m$-coordinates of information geometry is subject to simplex constraints on the state.
With an eye toward numerical integration, it is desirable to instead parameterize replicator dynamics in a \emph{flat} and \emph{unconstrained} vector space. This was done in \citep{Zeilmann:2020aa} using Lie group methods.

\vspace{0.25cm}
\begin{theorem}[Proposition~3.1 in \citep{Zeilmann:2020aa}]\label{theorem:af_tangent_param}
The solution for multi-population replicator dynamics
\begin{equation}\label{eq:general_af_before_param}
	\dot W(t) = \ROW_{W(t)}[F(W(t))], \quad W(0) = W_0
\end{equation}
in $\WM$ admits the parameterization
\begin{subequations}\label{eq:tangent_param}
\begin{align}
	W(t) &= \exp_{\BW}(V(t))\\
	\dot V(t) &= \Pi_0 F(\exp_{\BW}(V(t))),\qquad V(0) = \Pi_0 \log W_0
\end{align}
\end{subequations}
in the tangent space $V(t)\in T_0\WM$.
\end{theorem}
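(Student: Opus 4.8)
The plan is to verify by direct computation that the curve $W(t) := \exp_{\BW}(V(t))$, with $V$ the solution of the stated tangent-space ODE, solves the replicator system \eqref{eq:general_af_before_param} with the correct initial value, and then to conclude by uniqueness of ODE solutions that every solution of \eqref{eq:general_af_before_param} has this form. Since the lifting map \eqref{eq:lifting_map_def} and the payoff act separately on each node, it suffices to carry out the computation on a single factor $\SM_c$ with base point $\BS = \tfrac1c\eins_c$, where the lifting map reduces to the softmax $\exp_{\BS}(v) = \exp(v)/\la\eins_c,\exp(v)\ra$ because the factor $\tfrac1c$ cancels between numerator and denominator.

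The key step is to differentiate this map along a curve $v(t)$. Writing $w(t) = \exp_{\BS}(v(t))$, $y(t) = \exp(v(t))$ (componentwise), $Z(t) = \la\eins_c,y(t)\ra$, one has $\dot y = y\diamond\dot v$ and $\dot Z = \la y,\dot v\ra$, so differentiating the quotient $w = y/Z$ gives $\dot w = w\diamond\dot v - w\,\la w,\dot v\ra = (\Diag(w) - ww^\T)\dot v = \ROS_{w}\dot v$ with $\ROS$ as in \eqref{eq:replicator_def}; the normalization contributes exactly the rank-one correction. Applied node-wise this yields $\dot W(t) = \ROW_{W(t)}[\dot V(t)]$. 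Substituting $\dot V = \Pi_0 F(\exp_{\BW}(V)) = \Pi_0 F(W)$ and using that $\ROS_p\,\Pi_0 = \ROS_p$ on all of $\R^c$ — because $\ROS_p\eins_c = p - p\,\la\eins_c,p\ra = 0$ for $p\in\SM_c$, so the correction term $\tfrac1c\la\eins_c,\cdot\ra\eins_c$ in \eqref{eq:projection_def} is annihilated — we obtain $\dot W = \ROW_W[F(W)]$. For the initial condition, the identity \eqref{eq:exp-Pi} gives $\exp_{\BW} = \exp_{\BW}\circ\Pi_0$, hence $W(0) = \exp_{\BW}(\Pi_0\log W_0) = \exp_{\BW}(\log W_0)$; evaluating the node-wise lifting map at $\log(W_0)_i$, the base-point factor $\tfrac1c\eins_c$ cancels and $\la\eins_c,(W_0)_i\ra = 1$ leaves $W(0) = W_0$.

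It remains to close the uniqueness argument: assuming $F$ is such that the right-hand sides are locally Lipschitz (so that the $V$-ODE on $T_0\WM$ and \eqref{eq:general_af_before_param} are both well-posed), Picard--Lindel\"of shows that $\exp_{\BW}(V(t))$ is \emph{the} solution of \eqref{eq:general_af_before_param} on the common interval of existence, so every solution is parameterized as claimed. The only real subtlety is the middle step — differentiating the lifting map to recognize the replicator operator $\ROS_w$, together with the bookkeeping that $\ROW$ absorbs the projection $\Pi_0$; everything else is routine modulo the standing regularity assumption on $F$. Alternatively one could argue intrinsically, noting that $\exp_{\BW}\colon T_0\WM\to\WM$ is a diffeomorphism with inverse $W\mapsto\Pi_0\log W$ and that the two differential equations are conjugate under it, but the direct verification above is the most economical.
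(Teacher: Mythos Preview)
Your argument is correct: the chain-rule computation $\dot w = \ROS_w\dot v$ for $w=\exp_{\BS}(v)$, the absorption $\ROW_W\Pi_0 = \ROW_W$ via $\ROS_p\eins_c=0$, and the initial-value check via \eqref{eq:exp-Pi} are all valid, and the uniqueness appeal closes the argument under the stated regularity of $F$. Note, however, that the paper does not give its own proof of this statement---it is quoted as Proposition~3.1 of \cite{Zeilmann:2020aa} without reproduction---so there is no in-paper argument to compare against; your direct verification is exactly the standard proof one would expect and matches the spirit of the cited result.
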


With regard to the Embedding Theorem~\ref{theorem:af_embedding}, it turns out that while $T$ maps assignment matrices $W\in \WM$ to joint states $p\in\Sn$, $Q$ assumes a corresponding role for tangent vectors in $T_0\WM$.

\vspace{0.25cm}
\begin{theorem}[\textbf{Tangent Space Embedding Theorem}]\label{theorem:af_tangent_embedding}
The multi-population tangent space replicator dynamics
\begin{equation}\label{eq:general_af_tangent}
	\dot V = \Pi_0 F\big(\exp_{\BW}(V)\big), \quad V(0) = V_0
\end{equation}
on $T_0\WM$ is pushed forward by $Q$ to the tangent space replicator dynamics
\begin{equation}\label{eq:replicator_ms_tangent}
	\dot U = \Pi_0 \wh F\big(\exp_{\eins_N}(U)\big), \quad U(0) = Q(V_0), \qquad \wh F = Q \circ F \circ \Margin
\end{equation}
on $T_0\Sn$.
\end{theorem}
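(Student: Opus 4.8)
The plan is to reduce the Tangent Space Embedding Theorem to the already-established results by differentiating the conjugacy $T(\exp_{\BW}(V)) = \exp_{\eins_N}(Q(V))$, which is exactly the Lifting Map Lemma~\ref{lem:lifting_map} with $S = \BW$, noting $T(\BW) = \eins_N / N = \eins_{\Sn}$ up to the scaling convention used for $\exp_{\eins_N}$. So on the level of states, the curve $W(t) = \exp_{\BW}(V(t))$ in $\WM$ corresponds under $T$ to $p(t) = \exp_{\eins_N}(U(t))$ in $\Sn$ with $U(t) = Q(V(t))$ precisely when $V(t)$ and $U(t)$ solve the respective tangent dynamics with matching initial conditions. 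The claim to verify is therefore that $U(t) := Q(V(t))$ satisfies $\dot U = \Pi_0 \wh F(\exp_{\eins_N}(U))$ whenever $V$ satisfies \eqref{eq:general_af_tangent}.

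First I would compute $\dot U = Q(\dot V) = Q\big(\Pi_0 F(\exp_{\BW}(V))\big)$ using linearity of $Q$. The first key step is to show $Q \circ \Pi_0 = \Pi_0 \circ Q$ as maps $\R^{n\times c} \to \R^N$, i.e. that $Q$ intertwines the two simplex projections. This is a short linear-algebra check: $\Pi_0$ on $\R^{n\times c}$ subtracts the per-node mean from each node block, $\Pi_0$ on $\R^N$ subtracts the global mean over all $N$ coordinates; since $Q(X)_\gamma = \sum_i X_{i,\gamma_i}$, applying the row-wise mean subtraction to $X$ before $Q$ changes $Q(X)_\gamma$ by $-\sum_i \frac{1}{c}\la\eins_c, X_{i,\bullet}\ra$, which is a constant independent of $\gamma$, hence equals a multiple of $\eins_N$; conversely $\Pi_0 Q(X)$ subtracts the mean of $Q(X)$, which one checks equals the same constant because averaging $Q(X)_\gamma$ over all $\gamma \in [c]^n$ gives $\sum_i \frac{1}{c}\la\eins_c, X_{i,\bullet}\ra$ (each node contributes its block average). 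So $Q\Pi_0 X$ and $\Pi_0 Q X$ differ from $QX$ by the same $\eins_N$-multiple, proving the intertwining. Next, substituting $S = \BW$ and $V$ replaced by the argument of $F$ into Lemma~\ref{lem:lifting_map} gives $T(\exp_{\BW}(V)) = \exp_{T(\BW)}(Q(V)) = \exp_{\eins_N}(U)$, so $\exp_{\BW}(V) = \Margin\big(\exp_{\eins_N}(U)\big)$ by Theorem~\ref{prop:isometric_embedding_T} (marginalization inverts $T$ on $\imT$, and $\exp_{\eins_N}(U) \in \imT$). Combining: $\dot U = \Pi_0 Q\big(F(\Margin \exp_{\eins_N}(U))\big) = \Pi_0 \big(Q \circ F \circ \Margin\big)(\exp_{\eins_N}(U)) = \Pi_0 \wh F(\exp_{\eins_N}(U))$, which is \eqref{eq:replicator_ms_tangent}. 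The initial condition $U(0) = Q(V_0)$ holds by definition of $U$.

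I would also spell out why $\exp_{\eins_N}(U) \in \imT$ for all $U \in T_0\Sn$ in the image of $Q$ — in fact it holds for all $U$ of the form $Q(V)$, since then $\exp_{\eins_N}(Q(V)) = T(\exp_{\BW}(V)) \in \imT$ directly by the Lifting Map Lemma — so the appeal to $\Margin|_{\imT} = T^{-1}$ is legitimate along the whole trajectory; this is why the argument naturally lives on $Q(T_0\WM) = T_0\imT$ rather than all of $T_0\Sn$, matching the phrasing of the theorem (the pushforward of a dynamics is only defined on the image). The main obstacle I anticipate is the bookkeeping around the base point convention: the paper writes $\exp_{\eins_N}$ and $\exp_{\eins_{\Sn}}$ somewhat interchangeably, and one must confirm that $T(\BW) = \BS_{\Sn}$ and that the extended lifting map $\exp_p$ of \eqref{eq:lifting_map_def} is insensitive to positive rescaling of $p$ (it is, since $p$ appears homogeneously of degree one in numerator and denominator), so that $\exp_{T(\BW)} = \exp_{\eins_N}$ regardless of normalization. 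Once that is pinned down, the proof is essentially a one-line differentiation of Lemma~\ref{lem:lifting_map} plus the $Q\Pi_0 = \Pi_0 Q$ identity; I do not expect any analytic subtlety since everything is smooth and the two dynamics are ODEs with matching right-hand sides under the $Q$-correspondence, so uniqueness of solutions finishes it if one prefers to phrase the conclusion in terms of trajectories rather than vector fields.
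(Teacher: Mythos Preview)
Your proposal is correct and follows essentially the same route as the paper: set $U = QV$, compute $\dot U = Q\Pi_0 F(\exp_{\BW}(V))$, commute $Q$ past $\Pi_0$ (the paper isolates this as Lemma~\ref{lem:Q_proj_commute}, which you reprove inline), insert $\Margin\circ T = \mrm{id}_{\WM}$, and apply the Lifting Map Lemma to rewrite $T(\exp_{\BW}(V)) = \exp_{\eins_N}(U)$. Your additional remarks on the base-point normalization and on why $\exp_{\eins_N}(U)\in\imT$ are correct and simply make explicit what the paper leaves implicit.
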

\begin{proof}
Denoting $U = QV$ and using the lifting map (Lemma~\ref{lem:lifting_map}), we directly compute
\begin{subequations}
\begin{align}
	\dot U &= Q\dot V = Q\Pi_0 F\big(\exp_{\BW}(V)\big)\\
		&= \Pi_0 QF\big(\exp_{\BW}(V)\big)
		&&\text{ by Lemma~\ref{lem:Q_proj_commute}}\\
		&= \Pi_0 QF\big((\Margin\circ T)\big(\exp_{\BW}(V)\big)\big)
		&&\text{ by \eqref{eq:T_inv}}\\
		&= \Pi_0 QF\big(\Margin \exp_{\eins_N}(QV)\big)		&&\text{ by Lemma~\ref{lem:lifting_map}}\\
		&= \Pi_0 QF\big(\Margin \exp_{\eins_N}(U)\big)\\
		&= \Pi_0 \wh F\big(\exp_{\eins_N}(U)\big).
\end{align}
\end{subequations}
\end{proof}

Pushforward via $Q$ thus preserves the shape of \eqref{eq:general_af_tangent} up to the same fitness function transformation $\wh F = Q \circ F \circ T^{-1}$ from Theorem~\ref{theorem:af_embedding}. 

The set $\mimg Q \subseteq T_0\Sn$ contains exactly those tangent vectors corresponding to assignments $\imT \subseteq \Sn$ via lifting, because $T(W) = T(\exp_{\BW})(V) = \exp_{\eins_N}(QV)$ for any $W\in\WM$ and $V = \exp_{\BW}^{-1}(W)$ by Lemma~\ref{lem:lifting_map}.
In particular, the set $\mimg Q$ in which $U$ evolves, is a linear subspace of $T_0\WM$. This is a reason to study the tangent space flow \eqref{eq:replicator_ms_tangent} rather than the corresponding replicator dynamics if applicable, because $\imT\subseteq \Sn$ is the (curved) set of rank-1 tensors in $\Sn$.

\section{Learning Replicator Dynamics from Data}\label{sec:learning}

\begin{figure}[ht]
\centering
\includegraphics[width=\textwidth]{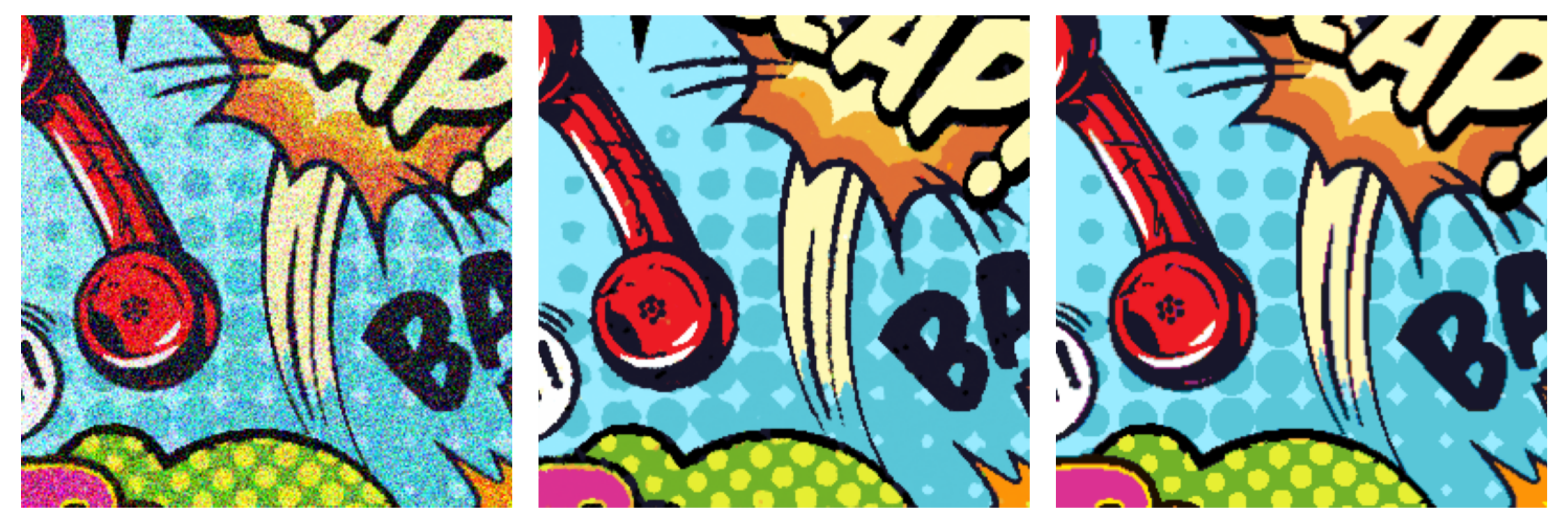}
\caption{\emph{Left}: Noisy input assignment of $c=47$ colors to the pixels of an image. \emph{Center}: Limit of an EGN flow \eqref{eq:vec_egn_field} with \textit{learned} interaction in $3\times 3$ pixel neighborhoods. \emph{Right}: Ground truth noise-free color assignment.}\label{fig:cartoon_labeling}
\end{figure}

Several applications have been proposed for the replicator dynamics of Section~\ref{sec:multi_games} including as a model of human brain functioning \citep{Madeo:2017}, collective learning \citep{Sato:2003}, epileptic seizure onset detection \citep{Hamavar:2021}, task mapping \citep{Madeo:2020} and collective adaptation \citep{Sato:2005}.
Assignment flows have been applied recently to the segmentation of digitized volume data under layer ordering constraints \citep{Sitenko:2021vu} (which reflect prior knowledge about tissues and anatomical structure) as well as for unsupervised image labeling tasks, employing spatial regularization \citep{Zisler:2020aa,Zern:2020ab}.

This small sample of examples illustrates that replicator dynamics can act as powerful data models in diverse applications. In situations where only partial knowledge about the system is available, system parameters may also be learned from data. To this end, \cite{Huhnerbein:2020aa} have studied the use of \emph{adjoint integration} to compute the model sensitivity of assignment flows, i.e. the gradient of system state with respect to parameters generating the flow.

Suppose we integrate a general dynamical system generated by parameters $\mathfrak{p}$ and wish for the final state $v(T)$ to minimize some loss function $\mc{L}$. The parameter learning problem for a fixed time horizon $T > 0$ then reads
\begin{subequations}\label{eq:learning_problem}
\begin{align}
	\min_{\mathfrak{p}}\, \mc{L}(&v(T, \mathfrak{p}))\\
	\text{subject to}\quad \dot v(t) &= f(v(t), \mathfrak{p}, t),\quad t\in [0,T],\label{eq:learning_forward_ode}\\
		v(0) &= v_0,
\end{align}
\end{subequations}
and a central quantity of interest is the gradient $\partial_\mathfrak{p} \mc{L}(v(T, \mathfrak{p}))$. It can be approximated in a \emph{discretize-then-optimize} fashion by first choosing a discretization of the ODE \eqref{eq:learning_forward_ode} on $[0,T]$ and subsequently computing the gradient of the discrete scheme used for computing $\mc{L}(v(T, \mathfrak{p}))$. This approach is easy to implement by using automatic differentiation software \cite{Baydin:2018aa}. However, it entails a large memory footprint in practical applications because system state $v(t_i)$ needs to be saved for all discretization points.
To circumvent this issue, one may instead proceed in an \emph{optimize-then-discretize} fashion as follows.

\vspace{0.25cm}
\begin{theorem}[Theorem~6 of \citep{Huhnerbein:2020aa}]\label{theorem:adjoint_ode}
The gradient of \eqref{eq:learning_problem} is given by
\begin{equation}\label{eq:adjoint_gradient}
	\partial_\mathfrak{p} \mc{L}(v(T, \mathfrak{p})) = \int_0^T \dd_\mathfrak{p} f (v(t), \mathfrak{p}, t)^\top \lambda(t) \dd t
\end{equation}
where $\dd_\mathfrak{p} f$ denotes the differential of $f$ with respect to $\mathfrak{p}$ and $x(t)$ and $\lambda(t)$ solve
\begin{subequations}\label{eq:adjoint_ode}
\begin{align}
	\dot v(t) &= f(v(t), \mathfrak{p}, t),\qquad v(0) = v_0,\\
	\dot \lambda(t) &= -\dd_v f(v(t), \mathfrak{p}, t)^\top \lambda(t),\qquad \lambda(T) = \partial \mc{L}(v(T)).
\end{align}
\end{subequations}
\end{theorem}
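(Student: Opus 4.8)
This is the classical adjoint-state (Lagrange-multiplier / Pontryagin) argument, and I would organize it in three steps. Fix the parameter $\mathfrak p$, write $v(t) = v(t,\mathfrak p)$ for the solution of \eqref{eq:learning_forward_ode}, and work one coordinate of $\mathfrak p$ at a time; let $\delta v(t) := \dd_{\mathfrak p} v(t,\mathfrak p)$ denote the corresponding parameter sensitivity. By smooth dependence of ODE solutions on parameters, $\delta v$ is well defined and, differentiating \eqref{eq:learning_forward_ode} in $\mathfrak p$, solves the variational equation
$\dot{\delta v}(t) = \dd_v f(v(t),\mathfrak p,t)\,\delta v(t) + \dd_{\mathfrak p} f(v(t),\mathfrak p,t)$, with $\delta v(0) = 0$ since $v_0$ is independent of $\mathfrak p$. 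The chain rule then gives $\partial_{\mathfrak p}\mc L(v(T,\mathfrak p)) = \dd_{\mathfrak p}v(T)^\top\,\partial\mc L(v(T))$, i.e. $\la \partial\mc L(v(T)), \delta v(T)\ra$ in the coordinatewise reading.

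Second, I would bring in the adjoint variable $\lambda$ defined by the terminal-value problem in \eqref{eq:adjoint_ode}. Since this is a \emph{linear} ODE in $\lambda$ with coefficient $-\dd_v f(v(t),\mathfrak p,t)^\top$ continuous along the fixed trajectory $v(\cdot)$, it has a unique solution on all of $[0,T]$. The crux is to differentiate the scalar pairing $t \mapsto \la \lambda(t),\delta v(t)\ra$: inserting both ODEs,
$\tfrac{\dd}{\dd t}\la \lambda(t),\delta v(t)\ra = \la -\dd_v f^\top\lambda,\delta v\ra + \la \lambda, \dd_v f\,\delta v + \dd_{\mathfrak p} f\ra = \la \lambda(t), \dd_{\mathfrak p} f(v(t),\mathfrak p,t)\ra$,
the two terms containing $\dd_v f$ cancelling exactly — which is precisely what the sign and transpose in \eqref{eq:adjoint_ode} are chosen to force. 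This is an integration-by-parts (Green's) identity pairing the variational equation with the adjoint equation.

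Third, integrate this identity over $[0,T]$ and use the boundary data $\delta v(0) = 0$ and $\lambda(T) = \partial\mc L(v(T))$ to obtain $\la \partial\mc L(v(T)), \delta v(T)\ra = \int_0^T \la \lambda(t), \dd_{\mathfrak p} f(v(t),\mathfrak p,t)\ra\,\dd t = \int_0^T \dd_{\mathfrak p} f(v(t),\mathfrak p,t)^\top\lambda(t)\,\dd t$. Combining with the first step identifies the left-hand side with $\partial_{\mathfrak p}\mc L(v(T,\mathfrak p))$, which is \eqref{eq:adjoint_gradient}. Reassembling the coordinates of $\mathfrak p$ gives the full gradient.

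The main obstacle is entirely in the first step: one must justify that $v(T,\mathfrak p)$ is differentiable in $\mathfrak p$, that differentiation commutes with the flow (validity of the variational equation), and that the resulting integrals are finite. Under the natural hypothesis that $f$ is $C^1$ in $(v,\mathfrak p)$ with locally Lipschitz derivatives on a neighborhood of the trajectory (so that standard smooth-dependence theorems for ODEs apply and $\dd_v f$, $\dd_{\mathfrak p} f$ are bounded along $v(\cdot)$), all of this is standard; the remaining manipulations — existence for the linear adjoint ODE, the product-rule cancellation, and the integration over $[0,T]$ — are routine. I would therefore spend most of the write-up stating these regularity assumptions precisely and citing the smooth-dependence result, and keep the computational core as the short display above.
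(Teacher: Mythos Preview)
Your argument is the standard and correct adjoint-state derivation: differentiate the forward flow in $\mathfrak p$ to obtain the variational equation for $\delta v$, pair it with the adjoint $\lambda$, use the product rule so that the $\dd_v f$ terms cancel, and integrate over $[0,T]$ with the given boundary data. The regularity caveats you flag (smooth dependence on parameters, boundedness of $\dd_v f$ and $\dd_{\mathfrak p} f$ along the trajectory) are exactly the points that need stating.

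There is nothing to compare against in the present paper: Theorem~\ref{theorem:adjoint_ode} is quoted verbatim from \cite{Huhnerbein:2020aa} and no proof is given here. Your proposal is precisely the classical argument one would expect to find in that reference (or any treatment of the adjoint method / Pontryagin-type sensitivity analysis), so it is entirely appropriate.
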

By choosing a quadrature for the integral \eqref{eq:adjoint_gradient}, Theorem~\ref{theorem:adjoint_ode} allows to compute the desired gradient without the need to save system state at all discretization points. Moreover, it has been shown \citep{Huhnerbein:2020aa, Sanz-Serna:2016} that for particular \emph{symplectic} integrators, discretization commutes with optimization, i.e. both orders of operation yield the same gradient.

Since the tangent space parameterization \eqref{eq:tangent_param} evolves on the unconstrained flat space $\TW$, Theorem~\ref{theorem:adjoint_ode} is directly applicable to it. 
An image labeling example is shown in Figure~\ref{fig:cartoon_labeling}. Here, the data is modeled by EGN dynamics \eqref{eq:vec_egn_field} with graph adjacency matrix $\Omega$ representing pixel neighborhoods ($3\times 3$). Starting from a noisy, high-entropy assignment of pixels to color prototypes ($c = 47$), the goal is to learn a label interaction matrix $B\in \R^{c\times c}$ such that EGN dynamics \eqref{eq:vec_egn_field} drive the state to a given noise-free assignment after the fixed integration time $T = 15$. We initialized $B$ as identity matrix and performed $100$ steps of the Adam optimizer to minimize cross-entropy between the ground truth assignment and the assignment state reached by EGN dynamics. This training procedure is highly scalable -- for the $256\times 256$ pixel image in Figure~\ref{fig:cartoon_labeling}, training takes less than a minute on a laptop computer and requires around 1.3GB of vRAM.

\section{Asymptotic Behavior}\label{sec:asymptotic_behavior}
A central topic in population dynamics is the study of how the properties of the underlying game characterized by the payoff function relate to steady states of the dynamical model. In this section, we describe how 
\begin{itemize}
\item
\emph{Nash equilibria (NE)} and 
\item
\emph{Evolutionarily stable states (ESS)} 
\end{itemize}
of multi-population games and their replicator dynamics behave under the embedding \eqref{eq:T_def}.  
Nash equilibria for multi-population games are population states at which no agent (in any population) has payoff to gain from unilaterally switching strategies.

\vspace{0.25cm}
\begin{definition}[\textbf{Nash Equilibrium}]\label{def:nash}
Let $\ol{\WM}$, the closure of $\WM$ be the set of multi-population states ($n$ populations, $c$ strategies) and let $F\colon \ol{\WM}\to \R^{n\times c}$ be the payoff for a multi-population game. The set of Nash equilibria of $F$ is defined as
\begin{equation}\label{eq:def_nash}
    \NE (F) = \{W\in\ol{\WM}\;|\; \forall i\in [n],\, \forall j\in \supp (W_i),\, \forall k\in [c]\,\colon F(W)_{i,j} \geq F(W)_{i,k}\}
\end{equation}
\end{definition}

Definition~\ref{def:nash} naturally extends the classic notion of Nash equilibrium to multi-population games.
Nash equilibria are preserved if the multi-population game is embedded as specified by Theorem~\ref{theorem:af_embedding}.

\vspace{0.25cm}
\begin{theorem}[\textbf{Embedded Nash Equilibria}]\label{theorem:embedded_nash}
Let $F\colon \ol{\WM}\to \R^{n\times c}$ be a multi-population game on $\ol{\WM}$ and $\wh F = Q\circ F\circ \Margin$ be the related population game on $\Sn$. Then 
\begin{equation}\label{eq:embedded_nash}
T\big(\NE (F)\big) = \NE (\wh F) \cap \ol{\imT}.
\end{equation}
\end{theorem}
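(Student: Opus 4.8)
The plan is to prove the two inclusions in \eqref{eq:embedded_nash} separately, working directly from the definition \eqref{eq:def_nash} of $\NE$ and exploiting the key structural fact that $\Margin$ inverts $T$ on $\imT$ (Theorem~\ref{prop:isometric_embedding_T}) together with the explicit formulas \eqref{eq:T_def}--\eqref{eq:def_Margin}. First I would record two elementary facts. (i) The image $\ol{\imT}$ consists exactly of the rank-one tensors $p_\gamma = \prod_i W_{i,\gamma_i}$ with $W \in \ol{\WM}$, and for such $p$ one has $\Margin p = W$; this identifies the relevant points of $\ol{\Sn}$ with multi-population states. (ii) For a rank-one $p = T(W)$, the support of $p$ as a distribution on $[c]^n$ is the product $\supp(p) = \prod_i \supp(W_i)$, i.e. $\gamma \in \supp(p)$ iff $\gamma_i \in \supp(W_i)$ for every $i$. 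These let me translate the single-population Nash condition for $\wh F$ at $p = T(W)$ into statements indexed by $(i,\gamma_i)$.

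For the inclusion $T(\NE(F)) \subseteq \NE(\wh F) \cap \ol{\imT}$: take $W \in \NE(F)$ and set $p = T(W) \in \ol{\imT}$. I must check the Nash condition for $\wh F$ at $p$, namely that for every $\gamma \in \supp(p)$ and every $\delta \in [c]^n$ we have $\wh F(p)_\gamma \ge \wh F(p)_\delta$. Since $\wh F = Q\circ F\circ \Margin$ and $\Margin p = W$, this reads $\sum_{i} F(W)_{i,\gamma_i} \ge \sum_i F(W)_{i,\delta_i}$. By fact (ii), $\gamma \in \supp(p)$ means each $\gamma_i \in \supp(W_i)$, so the multi-population Nash condition gives $F(W)_{i,\gamma_i} \ge F(W)_{i,k}$ for every $k \in [c]$, in particular for $k = \delta_i$; summing over $i \in [n]$ yields the claim. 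Thus $p \in \NE(\wh F)$.

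The reverse inclusion $\NE(\wh F)\cap\ol{\imT} \subseteq T(\NE(F))$ is where the real work sits, and it is the step I expect to be the main obstacle. Take $p \in \NE(\wh F)\cap\ol{\imT}$; by fact (i), $p = T(W)$ with $W = \Margin p \in \ol{\WM}$, so it suffices to show $W \in \NE(F)$. Fix $i_0 \in [n]$, $j \in \supp(W_{i_0})$, $k \in [c]$; I want $F(W)_{i_0,j} \ge F(W)_{i_0,k}$. The idea is to choose a favourable $\gamma \in \supp(p)$: pick $\gamma_{i_0} = j$ and, for each $i \ne i_0$, pick $\gamma_i \in \supp(W_i)$ arbitrarily (possible since each $W_i$ lies in the closed simplex and hence has nonempty support); by fact (ii), $\gamma \in \supp(p)$. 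Now let $\delta$ agree with $\gamma$ except $\delta_{i_0} = k$. The Nash condition for $\wh F$ at $p$ gives $\wh F(p)_\gamma \ge \wh F(p)_\delta$, i.e. $\sum_i F(W)_{i,\gamma_i} \ge \sum_i F(W)_{i,\delta_i}$; all terms with $i \ne i_0$ cancel, leaving $F(W)_{i_0,j} \ge F(W)_{i_0,k}$, as desired. Hence $W \in \NE(F)$ and $p = T(W) \in T(\NE(F))$.

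The only genuinely delicate points to be careful about in writing this up are: that $\Margin$ is well-defined and linear on all of $\ol{\Sn}$ and restricts to $T^{-1}$ on $\ol{\imT}$ (a closure argument extending Theorem~\ref{prop:isometric_embedding_T}, or just a direct computation from \eqref{eq:T_def}--\eqref{eq:def_Margin}); the support identity $\supp(T(W)) = \prod_i \supp(W_i)$, which is immediate from the product formula but should be stated; and the fact that a point of the \emph{closed} simplex always has nonempty support, so the auxiliary choices of $\gamma_i$ for $i \ne i_0$ exist. None of these is hard, but they are what makes the index-chasing legitimate. I would present the proof as the two inclusions above, preceded by a short lemma (or inline remark) recording facts (i) and (ii).
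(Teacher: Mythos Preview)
Your proposal is correct and follows essentially the same route as the paper: both prove the two inclusions directly from the definition of $\NE$, use the support identity $\supp(T(W))=\prod_i\supp(W_i)$ (the paper's Lemma~\ref{lem:embedded_support}, your fact~(ii)), and in the reverse direction pick a competing index that differs from the supported one in exactly one coordinate so that all but one summand of $Q F(W)$ cancels. If anything you are slightly more careful than the paper about working on the closure $\ol{\imT}$ and noting that $\Margin\circ T=\mathrm{id}$ extends there.
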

\begin{proof}
Let $W\in \NE(F)$ and let $\alpha\in\supp(T(W))$ be arbitrary. Then
\begin{subequations}
\begin{align}
    \wh F(T(W))_\alpha = (QF(W))_\alpha 
    &= \sum_{l\in[n]} F_{l,\alpha_l}(W) \\
    &\geq \sum_{l\in[n]} F_{l,\beta_l}(W) = \wh F(T(W))_\beta,\quad \forall\,\beta\in [c]^n,
\end{align}
\end{subequations}
because $\alpha_l \in \supp(W_l),\;\forall l\in[n]$, by Lemma~\ref{lem:embedded_support} and $W$ is a Nash equilibrium of $F$. This implies $T(\NE (F)) \subseteq \NE (\wh F) \cap \ol{\imT}$. Conversely, let $\p\in\NE(\wh F)\cap \imT$ have shape $\p=T(W)$ and let $\alpha_l \in \supp(W_l),\;\forall l\in[n]$. Then $\alpha\in\supp(\p)$ by Lemma~\ref{lem:embedded_support} and
\begin{equation}\label{eq:embedded_nash_proof1}
    \sum_{l\in[n]} F_{l,\alpha_l}(W) = \wh F(\p)_\alpha \geq \wh F(\p)_\beta = \sum_{l\in[n]} F_{l,\beta_l}(W),\quad \forall\,\beta\in [c]^n,
\end{equation}
because $\p$ is a Nash equilibrium. Choose $\beta\in[c]^n$ such that it matches $\alpha$ at all positions but $i\in[n]$. Then \eqref{eq:embedded_nash_proof1} implies $F_{i,\alpha_i}(W) \geq F_{i,\beta_i}(W)$ for arbitrary $\beta_i\in[c]$ which shows $\NE (\wh F) \cap \ol{\imT} \subseteq T(\NE (F))$.
\end{proof}

\begin{definition}[\textbf{Evolutionarily Stable State (ESS)}]\label{def:ess}
A multi-population state $W^\ast\in \WM$ is called an evolutionarily stable state (ESS) of a game $F\colon \WM\to \R^{n\times c}$, if there is an environment $U\subseteq \WM$ of $W^\ast$ such that
\begin{equation}\label{eq:ess_def}
    \la W - W^\ast, F(W)\ra < 0,\qquad \forall\, W\in U\setminus \{W^\ast\}.
\end{equation}
\end{definition}

This generalization of the classic ESS \citep{Smith:1973} to multi-population settings is called \emph{Taylor ESS} by \cite{Sandholm2010}. Within our embedding framework, an apparent reason recommends Definition~\ref{def:ess} over the weaker notion of \emph{monomorphic ESS} \citep{Cressman:1992}.

\vspace{0.25cm}
\begin{theorem}[\textbf{Embedded ESS}]\label{theorem:embedded_ess}
Let $F\colon \WM\to\R^{n\times c}$ be a multi-population game. Then $W^\ast$ is an ESS of $F$ exactly if there exists an environment $U\subseteq \imT$ of $T(W^\ast)$ such that
\begin{equation}\label{eq:embedded_ess}
    \la p - T(W^\ast), \wh{F}(p)\ra < 0,\qquad \forall\, p\in U\setminus \{T(W^\ast)\},
\end{equation}
where $\wh F = Q\circ F \circ \Margin$ denotes the embedded single-population game on $\Sn$ as specified by Theorem~\ref{theorem:af_embedding}.
\end{theorem}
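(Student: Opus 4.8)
The plan is to show that the defining ESS inequality for $F$ on a neighborhood in $\WM$ is equivalent to the stated inequality for $\wh F$ on a neighborhood in $\imT$, by transporting neighborhoods along the diffeomorphism $T\colon \WM\to\imT$ and rewriting the pairing using the $Q$-adjoint identity. First I would observe that $T$ is a homeomorphism onto $\imT$ (indeed an isometric embedding, Theorem~\ref{prop:isometric_embedding_T}) with inverse $\Margin|_\imT$; hence a set $U\subseteq\WM$ is an environment of $W^\ast$ if and only if $T(U)\subseteq\imT$ is an environment of $T(W^\ast)$, and $W\mapsto p:=T(W)$ sets up a bijection between $U\setminus\{W^\ast\}$ and $T(U)\setminus\{T(W^\ast)\}$.

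The computational core is the identity
\begin{equation}\label{eq:ess_pairing_rewrite}
	\la p - T(W^\ast),\ \wh F(p)\ra
	= \la p - T(W^\ast),\ Q\big(F(\Margin p)\big)\ra
	= \la \Margin p - \Margin T(W^\ast),\ F(\Margin p)\ra
	= \la W - W^\ast,\ F(W)\ra,
\end{equation}
where the first equality is the definition $\wh F = Q\circ F\circ\Margin$, the second is Lemma~\ref{lem:Q_adjoint} (adjointness of $Q$ and $\Margin$, applied to the vector $p - T(W^\ast)\in\R^N$), and the third uses $\Margin T(W)=W$ for $W\in\WM$ (the inversion \eqref{eq:T_inv}) together with $p=T(W)$. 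Thus for corresponding points the two pairings are literally equal, so the sign condition $\la p-T(W^\ast),\wh F(p)\ra<0$ for all $p\in T(U)\setminus\{T(W^\ast)\}$ holds if and only if $\la W-W^\ast,F(W)\ra<0$ for all $W\in U\setminus\{W^\ast\}$.

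Combining these two observations gives both directions: if $W^\ast$ is an ESS of $F$ with witnessing environment $U\subseteq\WM$, then $T(U)\subseteq\imT$ witnesses \eqref{eq:embedded_ess}; conversely, given an environment $U\subseteq\imT$ of $T(W^\ast)$ satisfying \eqref{eq:embedded_ess}, the set $\Margin(U)=T^{-1}(U)\subseteq\WM$ is an environment of $W^\ast$ on which the ESS inequality holds. I do not expect a serious obstacle here; the only points needing a little care are (i) that the domain of $M$ in Lemma~\ref{lem:Q_adjoint} is all of $\R^N$, so applying it to the difference $p-T(W^\ast)$ — which need not lie in $\imT$ or even be a probability vector — is legitimate, and (ii) that ``environment'' is read as a relatively open neighborhood within $\imT$ on the $\Sn$ side, matching the open neighborhood within $\WM$ on the other side, which is exactly what the homeomorphism $T$ preserves.
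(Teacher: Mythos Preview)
Your proposal is correct and follows essentially the same approach as the paper: write $p=T(W)$, unfold $\wh F=Q\circ F\circ \Margin$, apply Lemma~\ref{lem:Q_adjoint} to move $Q$ across the pairing, and use $\Margin\circ T=\mrm{id}_\WM$ to reduce to $\la W-W^\ast,F(W)\ra$. Your treatment is slightly more explicit about transporting neighborhoods via the homeomorphism $T$ and about the legitimacy of applying Lemma~\ref{lem:Q_adjoint} to the non-probability vector $p-T(W^\ast)$, but the argument is the same.
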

\begin{proof}
Since $U\subseteq\imT$, we may write $p=T(W)$ for $W$ in an environment $\Margin(U)\subseteq \WM$ of $W^\ast$. \eqref{eq:embedded_ess} then reads
\begin{subequations}
\begin{align}
    \la T(W) - T(W^\ast), \wh{F}(T(W))\ra
    &= \la T(W) - T(W^\ast), QF(\Margin T(W))\ra\\
    &= \la \Margin(T(W) - T(W^\ast)), F(W)\ra\text{ (Lemma~\ref{lem:Q_adjoint})}\\
    &= \la W - W^\ast, F(W)\ra
\end{align}
\end{subequations}
and the last row is strictly smaller than $0$ for all $W\in \Margin(U)\setminus \{W^\ast\}$ exactly if $W^\ast$ is an ESS of $F$ according to Definition~\ref{def:ess}.
\end{proof}

One useful aspect of Theorem~\ref{theorem:af_embedding} is that it formally reduces multi-population replicator dynamics to single-population ones. This enables us to transfer analysis of e.g. asymptotic behavior from the single-population to the multi-population setting.
We first summarize standard results on the asymptotic behavior of replicator dynamics derived from a potential function and refer to \cite{Sandholm2010} for a comprehensive overview.

\vspace{0.25cm}
\begin{theorem}[\textbf{Replicators converge to NE}]\label{theorem:replicator_opt}
Let $\wh{J}\colon\mc{S}_c\to\R$ be a $C^1$ potential such that the induced payoff function $\wh{F} = \Pi_0\nabla \wh{J}$  is Lipschitz on $\mc{S}_c$. Then for any internal point $p_0\in\mc{S}_c$, the replicator dynamics
\begin{equation}\label{eq:replicator_opt}
    \dot W(t) = R_{p(t)}[\wh{F}(p)],\qquad p(0) = p_0
\end{equation}
converge to a Nash equilibrium.
\end{theorem}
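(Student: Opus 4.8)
The plan is to read \eqref{eq:replicator_opt} as the Fisher--Rao gradient ascent flow of the potential $\wh J$, run the classical Lyapunov/LaSalle argument to show that the $\omega$-limit set consists of rest points, and then invoke the folk theorem of evolutionary game theory to identify those rest points --- reached from an \emph{internal} initial condition --- with Nash equilibria of $\wh F$ in the sense of Definition~\ref{def:nash}.

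First I would verify that $\wh J$ is a strict Lyapunov function. Differentiating along a solution and using that $R_p = \Diag(p) - pp^\top$ is symmetric and annihilates $\eins_c$ (so that $R_p\nabla\wh J(p) = R_p\Pi_0\nabla\wh J(p) = R_p\wh F(p)$), one gets
\begin{equation*}
\tfrac{\dd}{\dd t}\wh J(p(t)) = \la \nabla\wh J(p),\, R_p\wh F(p)\ra = \la R_p\wh F(p),\, \wh F(p)\ra = \sum_i p_i\wh F(p)_i^{2} - \Big(\sum_i p_i\wh F(p)_i\Big)^{2} = \vvar_p\big(\wh F(p)\big)\ \ge\ 0 ,
\end{equation*}
the last quantity being the variance of $\wh F(p)$ under the distribution $p$; by Cauchy--Schwarz it vanishes exactly when $\wh F(p)$ is constant on $\supp(p)$, i.e.\ exactly at the rest points of \eqref{eq:replicator_opt}. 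Since $\wh J$ is continuous on the compact set $\ol{\mc{S}_c}$, the map $t\mapsto\wh J(p(t))$ is then nondecreasing and bounded above, hence convergent, so $\wh J$ is constant on the $\omega$-limit set.

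Next I would observe that, componentwise, $\dot p_i = p_i\big(\wh F(p)_i - \la p,\wh F(p)\ra\big)$, so every face $\{p_i=0\}$ is forward invariant and the orbit from the internal point $p_0$ stays in $\mc{S}_c$; as $\wh F$ is Lipschitz, the solution exists for all $t\ge 0$ and remains in the compact set $\ol{\mc{S}_c}$, so $\omega(p_0)$ is nonempty, compact, connected and invariant. By LaSalle's invariance principle with the Lyapunov function above, $\omega(p_0)$ lies in the largest invariant subset of $\{p : \vvar_p(\wh F(p)) = 0\}$, which is precisely the rest-point set of \eqref{eq:replicator_opt}. Finally, a rest point $q\in\omega(p_0)$ satisfies $\wh F(q)_j = \la q,\wh F(q)\ra$ for every $j\in\supp(q)$ by definition, and the remaining Nash inequalities $\wh F(q)_k \le \la q,\wh F(q)\ra$ for $k\notin\supp(q)$ follow from the folk theorem: if some unused strategy $k$ were strictly above average at $q$, then $\tfrac{\dd}{\dd t}\log p_k(t) = \wh F(p(t))_k - \la p(t),\wh F(p(t))\ra$ would be bounded below by a positive constant near $q$, pushing $p_k$ away from $q_k = 0$ and contradicting $q\in\omega(p_0)$; the internal initialization $p_0\in\mc{S}_c$ is exactly what keeps $p_k(t)>0$ so that $\log p_k$ is defined along the whole trajectory. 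I would cite \citep{Sandholm2010} for this last point.

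The main obstacle is this final step, not the routine variance identity: upgrading ``the orbit approaches the rest-point set'' to ``the limit is a genuine Nash equilibrium'' is where the internal-initialization hypothesis becomes indispensable, and some care is needed because a merely $C^1$ potential does not preclude a continuum of rest points, so $\omega(p_0)$ need not be a single point --- the cleanest unconditional conclusion is convergence of $p(t)$ to the set $\NE(\wh F)$, with convergence to a single equilibrium following when, for instance, the Nash equilibria are isolated.
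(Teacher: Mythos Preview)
Your argument is correct and follows the same route as the paper's proof: $\wh J$ is a strict Lyapunov function, interior trajectories converge to rest points, and the folk theorem rules out non-Nash rest points as limits of interior orbits---the paper simply delegates these three steps to citations (Picard--Lindel\"of; Hofbauer, Sandholm; Bomze) where you spell out the variance identity and sketch the folk-theorem step. Your closing caveat about pointwise versus set convergence for a merely $C^1$ potential is an honest flag of an issue the paper's short proof likewise leaves to its references.
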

\begin{proof}
Because $F$ is Lipschitz, the forward trajectories of the dynamics \eqref{eq:replicator_opt} are unique by the Picard-Lindel\"of theorem. The potential $\wh{J}$ is a strict Lyapunov function for replicator dynamics and unique forward trajectories converge to restricted equilibria \citep{Hofbauer:2000, Sandholm:2001}. Since replicator dynamics do not satisfy Nash stationarity, there may be restricted equilibria which are not Nash equilibria. However, no internal trajectory converges to any of these points \citep{Bomze:1986}. The solution trajectories of \eqref{eq:replicator_opt} are internal trajectories because $p_0$ is an internal point and $\mc{S}_c$ is invariant under all replicator dynamics with Lipschitz payoff function for finite time, as is clear from e.g. the tangent space parameterization \eqref{eq:tangent_param}.
\end{proof}

There is a simple relationship between potential functions in the multi-population and single-population settings.

\vspace{0.25cm}
\begin{lemma}[\textbf{Potential Embedding}]\label{lem:potential_embedding}
If $F\colon \WM\to T_0\WM$ has potential $J$, then $\wh F = Q \circ F\circ \Margin$ has potential $\wh{J} = J\circ \Margin$.
\end{lemma}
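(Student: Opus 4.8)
The plan is to verify directly that $\widehat J = J \circ M$ serves as a potential for $\widehat F = Q \circ F \circ M$, meaning that $\widehat F = \Pi_0 \nabla \widehat J$ in the sense used in Theorem~\ref{theorem:replicator_opt}, or equivalently that the Euclidean gradient of $\widehat J$ agrees with $\widehat F$ up to a component in the normal direction $\eins_N$. The key computational input is the chain rule together with the $Q$ Adjoint Lemma~\ref{lem:Q_adjoint}: since $M \colon \R^N \to \R^{n\times c}$ is linear with adjoint $Q$, for any $p \in \Sn$ we have $\nabla \widehat J(p) = \nabla (J \circ M)(p) = M^\ast \big(\nabla J(M p)\big) = Q\big(\nabla J(M p)\big)$.

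First I would recall what it means for $F$ to have potential $J$: this says $F(W) = \nabla J(W)$ up to the tangent-space projection, i.e. $F = \Pi_0 \nabla J$ as a map $\WM \to T_0\WM$ (using the node-wise projection \eqref{eq:projection_Pi0}). Then, combining the chain-rule identity above with this hypothesis, I would compute
\begin{equation}
\nabla \widehat J(p) = Q\big(\nabla J(M p)\big) = Q\big(F(M p) + (\text{normal part})\big) = (Q \circ F \circ M)(p) + Q(\text{normal part}) = \widehat F(p) + Q(\text{normal part}).
\end{equation}
It remains to check that $Q$ sends the relevant "normal" directions on $\WM$ (spanned node-wise by $\eins_c$, i.e. $T_0\WM^\perp$) into the normal direction $\eins_N \in T_0\Sn^\perp$ on $\Sn$. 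This follows from the definition \eqref{eq:def_Q}: if $X_{i,\bullet} = \lambda_i \eins_c$ for each $i$, then $Q(X)_\gamma = \sum_i \lambda_i$, a constant independent of $\gamma$, hence a multiple of $\eins_N$. Therefore $\nabla \widehat J(p)$ and $\widehat F(p)$ differ only by a multiple of $\eins_N$, which is exactly what is needed for $\widehat J$ to be a potential for $\widehat F$ in the replicator sense (since $\Pi_0 \eins_N = 0$ and $R_p \eins_N = 0$).

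I do not anticipate a serious obstacle here; the only point requiring care is being precise about what "has potential" means — whether it is equality of the full Euclidean gradient or only its tangential projection — and ensuring the statement is invariant under adding normal components, which it is because both the replicator operator $R_p$ and the projection $\Pi_0$ annihilate the normal direction. A clean way to package the argument is to observe that the claim is really about restriction of $J$ and $\widehat J$ to the affine hulls $\aff \WM$ and $\aff \Sn$ and that $M$ maps one into the other, so the chain rule applies on the nose without any projection bookkeeping; then one invokes Lemma~\ref{lem:Q_adjoint} to identify the transpose of the (restricted) differential of $M$ with $Q$.
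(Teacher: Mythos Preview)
Your proposal is correct and follows the same approach as the paper: compute $\nabla\widehat J(p)$ by the chain rule, use linearity of $M$ to identify $DM=M$, and invoke Lemma~\ref{lem:Q_adjoint} to replace $M^\top$ by $Q$. The paper's proof stops there, writing simply $\nabla\widehat J(p)=(Q\circ\nabla J\circ M)(p)$; your additional bookkeeping about the projection $\Pi_0$ (equivalently, that $Q$ maps row-constant matrices to multiples of $\eins_N$) is exactly the content of Lemma~\ref{lem:Q_proj_commute}, so you may cite that instead of reproving it.
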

\begin{proof}
For $\wh{J}(p) = (J\circ \Margin)(p)$, we directly compute
\begin{equation}
    \nabla \wh{J}(p) = (D\Margin(p))^\top\nabla J(W) = (\Margin)^\top \circ \nabla J(W) = (Q \circ \nabla J \circ \Margin)(p)
\end{equation}
by denoting $W = \Margin(p)$ and using Lemma~\ref{lem:Q_adjoint}.
\end{proof}

We can now use the embedded potential of Lemma~\ref{lem:potential_embedding} and embedded Nash equilibria of Theorem~\ref{theorem:embedded_nash} to generalize the findings of Theorem~\ref{theorem:replicator_opt} to multiple populations. 

\vspace{0.25cm}
\begin{theorem}[\textbf{Multi-Population Replicators converge to NE}]\label{theorem:replicator_opt_multi}
Let $J\colon\WM\to\R$ be a $C^1$ potential such that the induced payoff function $F = \Pi_0\nabla J$ is Lipschitz on $\WM$. Then, for any internal point $W_0\in\WM$, the multi-population replicator dynamics
\begin{equation}\label{eq:replicator_opt_multi}
    \dot W(t) = \ROW_{W(t)}[F(W)],\qquad W(0) = W_0
\end{equation}
converge to a Nash equilibrium.
\end{theorem}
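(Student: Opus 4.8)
The plan is to reduce this to the single-population result Theorem~\ref{theorem:replicator_opt} via the Embedding Theorem~\ref{theorem:af_embedding}. First I would invoke Lemma~\ref{lem:potential_embedding}: since $F = \Pi_0\nabla J$ has potential $J$, the embedded payoff $\wh F = Q\circ F\circ \Margin$ has potential $\wh J = J\circ \Margin$, and moreover $\wh F = \Pi_0\nabla\wh J$ because composing $Q$ with $\Pi_0$ reproduces the tangential projection on $\Sn$ (here one uses that $\Margin$ is linear, so $\nabla\wh J = Q\nabla J\circ\Margin = Q\,F\circ\Margin$ up to the projection, which is exactly $\wh F$). Next I would check that $\wh F$ is Lipschitz on $\Sn$: this follows because $\Margin$ and $Q$ are fixed linear maps (hence Lipschitz) and $F$ is Lipschitz on $\WM$ by hypothesis, so the composition is Lipschitz on $\Sn$ — one should be slightly careful that $\Margin$ maps $\ol{\Sn}$ into $\ol\WM$ so the composition is well-defined on all of $\Sn$, which is immediate from the definition \eqref{eq:def_Margin} of $\Margin$ and the fact that marginals of a probability distribution are probability distributions.

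With these hypotheses verified, Theorem~\ref{theorem:replicator_opt} applies to $\wh F$ on $\Sn$ with $N = c^n$ strategies: for the internal initialization $p_0 := T(W_0)\in\imT\subseteq\Sn$ (internal because $W_0$ is internal and $T(W_0)_\gamma = \prod_i (W_0)_{i,\gamma_i} > 0$ for all $\gamma$), the embedded replicator dynamics $\dot p = \ROS_p\wh F(p)$, $p(0) = p_0$ converge to a Nash equilibrium $p^\ast\in\NE(\wh F)$. By Theorem~\ref{theorem:af_embedding}, this embedded trajectory $p(t)$ is exactly $T(W(t))$ where $W(t)$ solves \eqref{eq:replicator_opt_multi}; since $p(t)\in\imT$ for all $t$ and $\imT$ is closed in $\Sn$ (being the image of the compact-after-closure $\ol\WM$ under the continuous map $T$, or the rank-1 locus), the limit $p^\ast$ lies in $\ol{\imT}$. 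Then $p^\ast\in\NE(\wh F)\cap\ol{\imT} = T(\NE(F))$ by Theorem~\ref{theorem:embedded_nash}, so $W^\ast := \Margin(p^\ast)\in\NE(F)$. Finally, since $T$ is a homeomorphism onto $\imT$ with inverse $\Margin|_\imT$ (Theorem~\ref{prop:isometric_embedding_T}), convergence $p(t)\to p^\ast$ transfers to $W(t) = \Margin(p(t))\to\Margin(p^\ast) = W^\ast$, giving convergence of \eqref{eq:replicator_opt_multi} to the Nash equilibrium $W^\ast$.

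The main obstacle I anticipate is the bookkeeping at the boundary: Theorem~\ref{theorem:af_embedding} is stated for the open manifolds $\WM$ and $\Sn$, whereas convergence to a Nash equilibrium generally means convergence to a point in $\ol\WM$ (resp. $\ol{\Sn}$), so one must argue that the pushforward relation $p(t) = T(W(t))$ and the containment $p(t)\in\imT$ persist in the limit and that $T$ extends continuously to $\ol\WM\to\ol{\imT}$ with $\Margin$ as its inverse there — the expression \eqref{eq:T_def} makes this continuous extension obvious, but it should be remarked explicitly. A secondary point to state carefully is the identity $\wh F = \Pi_0\nabla\wh J$ on $\Sn$: one needs $Q\circ F = \Pi_0\circ Q\circ\nabla J\circ\Margin$, i.e. that $Q$ intertwines the two tangential projections, which is precisely the content used in the proof of Theorem~\ref{theorem:af_tangent_embedding} (Lemma~\ref{lem:Q_proj_commute}); I would cite that lemma rather than recompute.
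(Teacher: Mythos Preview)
Your proposal is correct and follows essentially the same route as the paper: embed via $T$, apply Lemma~\ref{lem:potential_embedding} and Theorem~\ref{theorem:af_embedding} to get a single-population potential game on $\Sn$, invoke Theorem~\ref{theorem:replicator_opt}, then use Theorem~\ref{theorem:embedded_nash} together with $p(t)\in\imT$ to pull the limiting Nash equilibrium back to $\WM$. You are in fact more careful than the paper about the Lipschitz verification for $\wh F$, the intertwining $Q\Pi_0 = \Pi_0 Q$ (Lemma~\ref{lem:Q_proj_commute}), and the transfer of convergence from $p(t)$ to $W(t)$ via the continuous extension of $\Margin$ to $\ol{\imT}$.
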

\begin{proof}
Let $p(t) = T(W(t))$. Then $p(t)$ follows the single-population replicator dynamics \eqref{eq:replicator_ms} by Theorem~\ref{theorem:af_embedding} which are induced by the embedded potential $\wh{J}(p) = J\circ \Margin$ due to Lemma~\ref{lem:potential_embedding} and start at the interior point $T(W_0)$ of $\Sn$. By Theorem~\ref{theorem:replicator_opt}, $p(t)$ converges to a NE of $\wh{F} = \Pi_0\circ Q\circ F\circ \Margin$ on $\Sn$. Since $p(t) = T(W(t))\in \imT$ for all times $t$, the limit point necessarily lies in the closure of $\imT$. Theorem~\ref{theorem:embedded_nash} then shows the assertion.
\end{proof}

By \cite[Proposition~3.1]{Sandholm:2001} all Nash equilibria satisfy the KKT optimality conditions for maximizing $J$ subject to simplex constraints.
If $J$ is concave, the KKT conditions are sufficient optimality conditions and thus \eqref{eq:replicator_opt} converges to a local maximizer.
In addition, $W$ is a Nash equilibrium exactly if $F(W)$ lies in the normal cone of the state space at $W$ \citep{Harker:1990, Nagurney:1998}. Thus, convergence of \eqref{eq:replicator_opt} to a boundary point which is not an extremal point only occurs if the trajectory reaches the boundary exactly perpendicularly. For assignment flows, it has been known that convergence to a non-extremal point of $\WM$ is an unusual occurrence. In fact, this behavior is not observed at all in the numerical solution of labeling problems for real-world data. \cite{Astroem2017} thus conjectured that convergence to a non-extremal point only occurs for a null set of initial population states. This was shown to be true for non-negative, linear fitness functions derived from a quadratic potential \citep{Zern:2020aa}.
From a game-theoretical perspective, only extremal points can be ESS under the posed conditions.

Note that the content of Theorem~\ref{theorem:replicator_opt_multi} is likely known to experts. We present it here to illustrate the power of the proposed formalism around Theorem~\ref{theorem:af_embedding} which provides a mathematical toolset for reducing the analysis of multi-population replicator dynamics to single-population ones.

\section{Conclusion}\label{sec:conclusion}
The proposed embedding framework for multi-population replicator dynamics provides a robust mathematical toolset for modeling complex population interactions. It formally reduces the complex multi-population case to a single-population one, simplifying subsequent analysis.
Current developments in the framework of assignment flows suggest multiple extensions of the present work.

In \cite{Savarino:2023}, assignment flows are characterized as critical points of an action functional within a geometric formalism of mechanics. An analogous characterization was previously suggested for single-population replicator dynamics \citep{raju2018variational} under assumptions which are valid only in the special case $n=2$ \cite[Section 4.4]{Savarino:2023}. In light of the present paper, a natural question is whether both perspectives are equivalent under embedding of the multi-population case.

A generalized perspective on assignment flows was proposed by \cite{Schwarz:2023}. The authors study a dynamical system on a product of density matrix manifolds called \emph{Quantum State Assignment Flow}. Although density matrices can represent entangled states and constitute a strict generalization of discrete probability measures, the underlying information geometric framework is broadly analogous. This suggests that generalized, quantum embedding results, along the lines introduced in the present paper, should be achievable. 

We briefly elaborate this point. Quantum mechanics is formulated on complex projective space $\mathbb{P}(\mathbb{C}^{c}) = \mathbb{P} ^{c-1}$, i.e. the state of an $c$-dimensional quantum system lives in the $(c-1)$-dimensional projective space. The states of a composite quantum system with $n$ components comprise the space $\mathbb{P}\big( \bigotimes_{i=1}^n \mathbb{C}^c \big) \cong \mathbb{P}^{N - 1}$, the projective space of the tensor products and $N = c^n$. The Segre embedding $\sigma$ \citep{Smith:2000aa} is an analytic isometric embedding of products of projective spaces into higher dimensional projective space \citep{Chen:2013}, i.e.
\begin{equation}
    \sigma : \mathbb{P}^{c-1} \times \dots \times \mathbb{P}^{c-1} \hookrightarrow \mathbb{P}^{N-1}, 
\end{equation}
where the product contains $n$ copies of $\mathbb{P} ^{c-1}$. The map $\sigma$ is an isometry if the product of projective spaces is equipped with the product of Fubini-Study metrics and the projective space of the tensor product with the high dimensional Fubini-Study metric. The \emph{separable (unentangled) quantum states} of the composite system are precisely the image of the Segre embedding. 
Furthermore, $\mathbb{P}^{c-1}$ admits a description as a toric variety with base $\triangle_c$, the simplex with boundary \citep{Bengtsson:2017aa}. Exploiting this structure makes it possible to choose compatible smooth embeddings
\begin{equation}
    \iota : \mathcal{S}_c \times  \dots  \times \mathcal{S}_c \hookrightarrow \mathbb{P}^{c-1} \times \dots  \times \mathbb{P}^{c-1},
\end{equation}
and to define a projection map $\pi_c:\mathbb{P}^{c-1} \to \triangle_c$. The embedding $T$ given by \eqref{eq:T_def} is compatible with the Segre embedding $\sigma$ in the sense that the diagram
\begin{equation}
    \begin{tikzcd}
        \mathbb{P}^{c-1} \times  \dots  \times  \mathbb{P}^{c-1} \ar[r,"\sigma"] 
        & 
        \mathbb{P}^{N-1} \ar[d,"\pi_N"] \\ 
        \mathcal{S}_c \times \dots \times \mathcal{S} _c \ar[u,hook,"\iota"] \ar[r,"T"] 
        & 
        \mathcal{S}_N  
    \end{tikzcd}
\end{equation}
is well-defined and commutes. Well-defined refers here to the fact that $\pi_N \circ \sigma \circ \iota$ maps to $\mathcal{S} _N \subset \triangle_N$. Additionally, similar compatibility relations remain valid when quantum states are described in terms of density matrices. 

Elaborating the consequences of the results in this paper in connection with the more general quantum state assignment flow approach is an attractive research problem for future work.

\backmatter

\vspace{1cm}
\bmhead{Acknowledgments}

The original artwork used in Figure~\ref{fig:cartoon_labeling} was designed by dgim-studio / Freepik.

The authors thank Fabrizio Savarino for many fruitful discussions and suggestions.

\section*{Declarations}

\paragraph{Funding}
This work is funded by the Deutsche Forschungsgemeinschaft (DFG), grant SCHN 457/17-1, within the priority programme SPP 2298: ``Theoretical Foundations of Deep Learning''.
This work is funded by the Deutsche Forschungsgemeinschaft (DFG) under Germany's Excellence Strategy EXC-2181/1 - 390900948 (the Heidelberg STRUCTURES Excellence Cluster).

\paragraph{Competing interests}
The authors have no relevant financial or non-financial interests to disclose.

\newpage
\bibliography{journal_game}

\newpage
\begin{appendices}

\section{Additional Lemmata}\label{app:lemmata}

\begin{lemma}\label{lem:T_injective}
The mapping $T\colon \WM\to \imT$ defined by \eqref{eq:T_def} is injective.
\end{lemma}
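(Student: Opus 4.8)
The plan is to show injectivity of $T\colon\WM\to\imT$ by exhibiting an explicit left inverse, namely the marginalization map $\Margin$ restricted to $\imT$. This is essentially the content of \eqref{eq:T_inv} in Theorem~\ref{prop:isometric_embedding_T}, but since that theorem may be invoked only after the present lemma is established (the lemma is placed in the appendix precisely to support it), I would give a direct argument. Concretely, I would prove that for every $W\in\WM$ and every $(i,j)\in[n]\times[c]$ one has $\Margin\big(T(W)\big)_{ij} = W_{i,j}$, so that $\Margin\circ T = \mathrm{id}_{\WM}$, whence $T$ is injective.

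The key computation is a factorization of the sum over multi-indices. Fix $i\in[n]$ and $j\in[c]$. By the definitions \eqref{eq:def_Margin} and \eqref{eq:T_def},
\begin{equation}
\Margin\big(T(W)\big)_{ij} = \sum_{\gamma\in[c]^n:\,\gamma_i=j} T(W)_\gamma = \sum_{\gamma\in[c]^n:\,\gamma_i=j}\ \prod_{k\in[n]} W_{k,\gamma_k}.
\end{equation}
Since the constraint $\gamma_i=j$ only pins down the $i$-th coordinate, the remaining coordinates $\gamma_k$ for $k\neq i$ range freely over $[c]$, and the product factorizes, so
\begin{equation}
\Margin\big(T(W)\big)_{ij} = W_{i,j}\ \prod_{k\in[n],\,k\neq i}\Big(\sum_{\ell\in[c]} W_{k,\ell}\Big) = W_{i,j}\cdot 1 = W_{i,j},
\end{equation}
using $\la\eins_c,W_k\ra=1$ for each node $k$ because $W_k\in\SM_c$. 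Hence $\Margin\circ T=\mathrm{id}_{\WM}$, and a map with a left inverse is injective.

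I do not expect a genuine obstacle here; the only point requiring care is bookkeeping — being explicit that the sum over $\{\gamma:\gamma_i=j\}$ decouples into independent sums over the coordinates $k\neq i$, and that each such sum equals $1$ by the simplex constraint. One should also note that the argument uses only $W\in\overline{\WM}$ (nonnegativity and row sums $1$), so the same identity extends to the closure if needed elsewhere; for the stated lemma, restricting to $\WM$ suffices.
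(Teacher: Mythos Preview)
Your proof is correct. The computation that $\Margin\circ T=\mathrm{id}_{\WM}$ via factorization of the multi-index sum is clean and self-contained, and a left inverse immediately gives injectivity; your remark about circularity is moot since the argument stands on its own.

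The paper, however, takes a different route for this lemma. It assumes $T(W^{(1)})=T(W^{(2)})$, fixes a node $i$, and compares two multi-indices $\gamma,\alpha$ that agree off $i$; dividing the corresponding equalities $T(W^{(1)})_\gamma=T(W^{(2)})_\gamma$ and $T(W^{(1)})_\alpha=T(W^{(2)})_\alpha$ yields the cross relation $W^{(1)}_{i,\alpha_i}W^{(2)}_{i,\gamma_i}=W^{(1)}_{i,\gamma_i}W^{(2)}_{i,\alpha_i}$, and then summing over $\alpha_i$ and using the simplex constraint forces $W^{(1)}_{i,\gamma_i}=W^{(2)}_{i,\gamma_i}$. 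Your marginalization argument is arguably more direct and, as you observe, uses only the row-sum condition, so it extends verbatim to $\ol{\WM}$; the paper's ratio step genuinely needs strict positivity to divide. In fact, the paper carries out exactly your computation later, inside the proof of Theorem~\ref{prop:isometric_embedding_T}, to identify $\Margin|_{\imT}$ as the inverse---so your approach effectively merges the injectivity lemma with that identification in one stroke.
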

\begin{proof}
Let $W^{(1)}, W^{(2)}\in \WM$ satisfy $T(W^{(1)}) = T(W^{(2)})$. Let $\gamma \in [c]^n$ be an arbitrary multi-index. Fix an arbitrary vertex $i\in [n]$ and let $\alpha\in [c]^n$ match $\gamma$ at all vertices $k\in [n]\setminus\{i\}$. Then $T(W^{(1)}) = T(W^{(2)})$ implies both $T(W^{(1)})_\gamma = T(W^{(2)})_\gamma$ and $T(W^{(1)})_\alpha = T(W^{(2)})_\alpha$. Division thus gives 
\begin{equation}\label{eq:t_inj_proof1}
	W^{(1)}_{i,\alpha_i} W^{(2)}_{i,\gamma_i} = W^{(1)}_{i,\gamma_i} W^{(2)}_{i,\alpha_i}.
\end{equation}
Since $W^{(1)}, W^{(2)}\in \WM$, the entries of row $i$ sum to 1. Using this and the fact that $\alpha_i\in [c]$ is arbitrary, we find
\begin{equation}
	W^{(2)}_{i,\gamma_i} 
	= \sum_{j\in [c]} W^{(1)}_{i,j} W^{(2)}_{i,\gamma_i}
	\stackrel{\eqref{eq:t_inj_proof1}}{=} 
	\sum_{j\in [c]} W^{(1)}_{i,\gamma_i} W^{(2)}_{i,j}
	= W^{(1)}_{i,\gamma_i}.
\end{equation}
Since $\gamma_i\in [c]$ was arbitrary, this shows $W^{(1)} = W^{(2)}$.
\end{proof}

\begin{lemma}\label{lem:embedded_support}
For every $W\in\ol{\WM}$ one has $\gamma \in \supp (T(W))$ if and only if $\gamma_i\in\supp (W_i)$ for all $i\in[n]$.
\end{lemma}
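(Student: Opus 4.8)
The plan is to prove the equivalence by chasing the definition of support through the product formula \eqref{eq:T_def}. Recall that for $W \in \ol{\WM}$ and a multi-index $\gamma \in [c]^n$, one has $T(W)_\gamma = \prod_{i \in [n]} W_{i,\gamma_i}$, a finite product of nonnegative reals. The key elementary fact is that a finite product of nonnegative numbers is strictly positive if and only if every factor is strictly positive; equivalently, it is zero if and only if at least one factor is zero. Since $\gamma \in \supp(T(W))$ means precisely $T(W)_\gamma > 0$, and $\gamma_i \in \supp(W_i)$ means precisely $W_{i,\gamma_i} > 0$, the equivalence is then immediate.

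Concretely, I would argue as follows. Fix $W \in \ol{\WM}$ and $\gamma \in [c]^n$. By \eqref{eq:T_def},
\begin{equation}
  \gamma \in \supp(T(W)) \iff T(W)_\gamma = \prod_{i \in [n]} W_{i,\gamma_i} > 0.
\end{equation}
Because each $W_{i,\gamma_i} \geq 0$ (as $W \in \ol{\WM}$ and $\ol{\SM_c} \subseteq \R_{\geq 0}^c$), the product is strictly positive exactly when $W_{i,\gamma_i} > 0$ for every $i \in [n]$, i.e. exactly when $\gamma_i \in \supp(W_i)$ for all $i \in [n]$. This is the claimed equivalence. One direction (if every $\gamma_i$ is in the support, then $\gamma$ is in the support) uses that the product of finitely many positive numbers is positive; the converse uses the contrapositive, namely that if some $W_{i,\gamma_i} = 0$ then the whole product vanishes.

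I do not expect any genuine obstacle here: the statement is essentially a restatement of the multiplicativity of positivity for the entries of the rank-1 tensor $T(W)$. The only point requiring a word of care is ensuring the argument is valid on the \emph{closure} $\ol{\WM}$ rather than just the open manifold $\WM$ — but on $\ol{\WM}$ the entries are still nonnegative, which is all that is used, and on $\WM$ itself the statement is trivial since every entry is strictly positive and both sides hold for all $\gamma$. Thus the proof is a two-line observation about finite products of nonnegative reals applied to the defining formula \eqref{eq:T_def}.
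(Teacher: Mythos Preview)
Your proposal is correct and follows essentially the same approach as the paper: both unwind the definition of support through the product formula \eqref{eq:T_def} and use that a finite product of nonnegative reals is strictly positive if and only if every factor is. The paper's proof is simply the four-line chain of set equalities you describe.
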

\begin{proof}
We directly compute
\begin{align}
    \supp(T(W)) &= \{\gamma\in [c]^n\colon T(W)_\gamma > 0\}\\
        &= \{\gamma\in [c]^n\colon \prod_{i\in[n]} W_{i,\gamma_i} > 0\}\\
        &= \{\gamma\in [c]^n\colon W_{i,\gamma_i} > 0,\;\forall i\in[n]\}\\
        &= \{\gamma\in [c]^n\colon \gamma_i \in \supp(W_i),\;\forall i\in[n]\}.
\end{align}
\end{proof}

\begin{lemma}\label{lem:Q_proj_commute}
For any $V\in\R^{n\times c}$ it holds $Q\Pi_0 V = \Pi_0 QV$ for the mappings $Q, \Pi_{0}$ given by \eqref{eq:def_Q} and \eqref{eq:projection_Pi0}.
\end{lemma}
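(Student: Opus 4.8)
The plan is to verify the identity $Q\Pi_0 V = \Pi_0 QV$ by direct computation on components, indexed by multi-indices $\gamma \in [c]^n$. First I would recall the two relevant definitions: the projection $\Pi_0$ on $\R^{n\times c}$ acts rowwise via \eqref{eq:projection_def}, so $(\Pi_0 V)_{i,k} = V_{i,k} - \frac{1}{c}\sum_{l\in[c]} V_{i,l}$, while $\Pi_0$ on $\R^N = \R^{c^n}$ subtracts the average over all $N$ coordinates, i.e. $(\Pi_0 x)_\gamma = x_\gamma - \frac{1}{N}\sum_{\beta\in[c]^n} x_\beta$. The map $Q$ sends $V$ to $Q(V)_\gamma = \sum_{i\in[n]} V_{i,\gamma_i}$ by \eqref{eq:def_Q}.

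Next I would compute the left-hand side: $(Q\Pi_0 V)_\gamma = \sum_{i\in[n]} (\Pi_0 V)_{i,\gamma_i} = \sum_{i\in[n]}\big(V_{i,\gamma_i} - \frac{1}{c}\sum_{l\in[c]} V_{i,l}\big) = Q(V)_\gamma - \frac{1}{c}\sum_{i\in[n]}\sum_{l\in[c]} V_{i,l}$. The subtracted term is a constant independent of $\gamma$; call it $c_0 = \frac{1}{c}\sum_{i,l} V_{i,l}$. For the right-hand side I need the mean of $Q(V)$ over all multi-indices: $\frac{1}{N}\sum_{\beta\in[c]^n} Q(V)_\beta = \frac{1}{N}\sum_{\beta\in[c]^n}\sum_{i\in[n]} V_{i,\beta_i}$. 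Swapping sums and noting that for fixed $i$ the value $\beta_i$ ranges over $[c]$ while the other $n-1$ coordinates range freely (contributing a factor $c^{n-1}$), this equals $\frac{1}{N}\sum_{i\in[n]} c^{n-1}\sum_{l\in[c]} V_{i,l} = \frac{c^{n-1}}{c^n}\sum_{i,l} V_{i,l} = \frac{1}{c}\sum_{i,l}V_{i,l} = c_0$. Hence $(\Pi_0 QV)_\gamma = Q(V)_\gamma - c_0$, which matches the left-hand side, proving the identity.

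There is no real obstacle here; the only point requiring a moment's care is the combinatorial counting in the averaging step — recognizing that summing $V_{i,\beta_i}$ over all $\beta \in [c]^n$ produces the factor $c^{n-1}$ because the coordinates $\beta_j$ for $j \neq i$ are unconstrained — and being careful that the two occurrences of $\Pi_0$ refer to the projections on the two different spaces $\R^{n\times c}$ and $\R^N$, as fixed by \eqref{eq:projection_Pi0} and \eqref{eq:projection_def}. Alternatively, one could give a one-line argument: $Q$ is linear, $\Pi_0 = I - \frac{1}{m}\eins\eins^\T$ on each space (with $m = c$ resp. $m = N$), and $Q$ maps the constant vector $\eins_{n\times c}$ (in the rowwise sense, i.e. $\frac1c\eins_{n\times c}$) appropriately onto $\frac{n}{?}$ times $\eins_N$; but the component computation is the cleanest and I would present that.
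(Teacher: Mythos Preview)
Your proof is correct and follows essentially the same route as the paper: both compute $(Q\Pi_0 V)_\gamma$ componentwise to obtain $(QV)_\gamma$ minus a constant $c_0 = \tfrac{1}{c}\sum_{i,l}V_{i,l}$, and then identify $c_0$ with the mean $\tfrac{1}{N}\sum_\beta (QV)_\beta$. The only cosmetic difference is that the paper obtains this identification by invoking the adjointness relation $\la V, \Margin p\ra = \la QV, p\ra$ (Lemma~\ref{lem:Q_adjoint}) with $p = \tfrac{1}{N}\eins_N$, whereas you carry out the equivalent combinatorial count $\sum_{\beta}V_{i,\beta_i} = c^{n-1}\sum_l V_{i,l}$ explicitly.
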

\begin{proof}
For arbitrary $\gamma\in [c]^n$, we compute
\begin{equation}
	(Q\Pi_0 V)_\gamma 
		= \sum_{i\in[n]} (\Pi_{0} V_{i})_{\gamma_{i}}
		= \sum_{i\in [n]} \big( V_{i,\gamma_i} - \la V_i, \frac{1}{c}\eins_c\ra \big)
		= (QV)_\gamma - \la V, \underbrace{\frac{1}{c}\eins_{n\times c}}_{= \Margin \BS}\ra
\end{equation}
and thus, by Lemma~\ref{lem:Q_adjoint} ,
\begin{equation}
	Q\Pi_0 V = QV - \la QV, \frac{1}{N} \eins_N\ra\eins_N = \Pi_0 QV,
\end{equation}
which was the assertion.
\end{proof}

\begin{lemma}\label{lem:dT_closed_form}
  Let $\gamma \in [c]^n$. The differential of $T$ at $W \in \WM$ in direction $V \in T_0\WM$ is given by
  \begin{equation}\label{eq:dT_closed_form}
    dT|_W[V] = \big( dT_\gamma|_W [V]\big)_{\gamma \in [c]^n} = T(W) \diamond Q\Big[\frac{V}{W}\Big].
  \end{equation}
\end{lemma}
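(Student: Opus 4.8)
The plan is to compute the differential componentwise, since $T$ is defined componentwise by the product formula $T(W)_\gamma = \prod_{i\in[n]} W_{i,\gamma_i}$. First I would fix a multi-index $\gamma\in[c]^n$ and differentiate this product along a curve $t\mapsto W + tV$ (or more carefully, along a curve in $\WM$ with velocity $V\in T_0\WM$). By the product rule, $\frac{d}{dt}\big|_{t=0}\prod_{i\in[n]}(W_{i,\gamma_i} + tV_{i,\gamma_i}) = \sum_{i\in[n]} V_{i,\gamma_i}\prod_{k\neq i} W_{k,\gamma_k}$. Assuming $W\in\WM$ so that all entries are strictly positive, I can factor out the full product $\prod_{k\in[n]} W_{k,\gamma_k} = T(W)_\gamma$ to rewrite this as $T(W)_\gamma \sum_{i\in[n]} \frac{V_{i,\gamma_i}}{W_{i,\gamma_i}}$.

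Next I would recognize the sum $\sum_{i\in[n]} \frac{V_{i,\gamma_i}}{W_{i,\gamma_i}}$ as exactly the $\gamma$-component of $Q$ applied to the matrix $\frac{V}{W}$ (componentwise division), by the definition \eqref{eq:def_Q} of $Q$: indeed $Q(X)_\gamma = \sum_{i\in[n]} X_{i,\gamma_i}$ with $X = \frac{V}{W}$. Hence $dT_\gamma|_W[V] = T(W)_\gamma \cdot Q\big[\frac{V}{W}\big]_\gamma$, and collecting over all $\gamma\in[c]^n$ gives the claimed identity $dT|_W[V] = T(W)\diamond Q\big[\frac{V}{W}\big]$, since $\diamond$ denotes componentwise multiplication.

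The computation is essentially routine — the only point requiring a little care is the domain issue: $T$ is a map between manifolds $\WM\to\imT\subseteq\Sn$, so strictly speaking $dT|_W$ is a linear map $T_W\WM \to T_{T(W)}\Sn$, and one should check the right-hand side indeed lands in the tangent space $T_{T(W)}\Sn$, i.e. that $\la\eins_N, T(W)\diamond Q[\frac{V}{W}]\ra = 0$ for $V\in T_0\WM$. This follows from $\la\eins_N, T(W)\diamond Q[\frac{V}{W}]\ra = \la T(W), Q[\frac{V}{W}]\ra = \la \Margin\, T(W), \frac{V}{W}\ra$ by Lemma~\ref{lem:Q_adjoint}, and $\Margin\, T(W) = W$ by Theorem~\ref{prop:isometric_embedding_T}, so the expression equals $\la W, \frac{V}{W}\ra = \la \eins_{n\times c}, V\ra = 0$ since $V\in T_0\WM$. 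I expect this consistency check to be the only mildly nontrivial step; the product-rule calculation itself is immediate.
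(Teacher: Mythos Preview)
Your proposal is correct and follows essentially the same route as the paper: fix $\gamma$, apply the product rule along a curve through $W$ with velocity $V$, factor out $T(W)_\gamma$, and recognize the remaining sum as $Q\big[\frac{V}{W}\big]_\gamma$. The additional tangent-space consistency check you sketch is not needed for the lemma (it is automatic since $T$ maps $\WM$ into $\Sn$), and you should avoid invoking Theorem~\ref{prop:isometric_embedding_T} there, as that theorem's proof in turn relies on Lemma~\ref{lem:dT_closed_form}; if you want to include the check, use the direct computation $\Margin\,T(W)=W$ instead.
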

\begin{proof}
  Suppose $\eta \colon (-\veps, \veps) \to \WM$ is a smooth curve with $\eta(0) = W$ and $\dot{\eta}(0) = V$, for some $\veps>0$. Let $\gamma \in [c]^n$ be arbitrary and consider the component $T_\gamma$. Then
  \begin{subequations}
  \begin{align}
    d T_\gamma|_W[V] &= \tfrac{d}{dt} T_\gamma(\eta(t))\big|_{t=0} = \tfrac{d}{dt} \prod_{i \in [n]} \eta_{i, \gamma_i}(t)\big|_{t=0}\\
    &= \sum_{k\in[n]} \dot{\eta}_{k, \gamma_k}(0) \prod_{i\in[n]\setminus\{k\}} \eta_{i, \gamma_i}(0) = \sum_{k\in[n]} V_{k, \gamma_k} \prod_{i\in[n]\setminus\{k\}} W_{i, \gamma_i}\\
    &= \sum_{k\in[n]} \frac{V_{k, \gamma_k}}{W_{k, \gamma_k}} T_\gamma(W) \overset{\eqref{eq:def_Q}}{=} T_\gamma(W) Q_\gamma\Big(\frac{V}{W}\Big).
  \end{align}
  \end{subequations}
  Because of $dT|_W[V] = ( dT_\gamma|_W [V])_{\gamma \in [c]^n}$ the expression in \eqref{eq:dT_closed_form} directly follows.
\end{proof}

\begin{lemma}\label{lem:ker_Q}
	It holds $\mkernel Q = \{\Diag(d)\eins_{n\times c}\colon d\in\R^n,\;\la d,\eins_n\ra = 0\}$ as well as $\mrank Q = nc - (n-1)$.
\end{lemma}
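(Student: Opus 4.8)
The plan is to compute the kernel of $Q$ directly from the defining formula \eqref{eq:def_Q}, namely $Q(X)_\gamma = \sum_{i\in[n]} X_{i,\gamma_i}$ for all $\gamma\in[c]^n$, and then deduce the rank from the rank--nullity theorem. First I would verify the inclusion ``$\supseteq$'': if $X = \Diag(d)\eins_{n\times c}$ with $\langle d,\eins_n\rangle = 0$, then $X_{i,j} = d_i$ for all $j\in[c]$, so $Q(X)_\gamma = \sum_{i\in[n]} d_i = 0$ for every multi-index $\gamma$, hence $X\in\mkernel Q$. This direction is immediate.

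The substantive direction is ``$\subseteq$''. Suppose $X\in\R^{n\times c}$ satisfies $Q(X) = 0$, i.e. $\sum_{i\in[n]} X_{i,\gamma_i} = 0$ for every $\gamma\in[c]^n$. The key observation is to compare two multi-indices that differ in exactly one coordinate: fix a vertex $k\in[n]$ and two labels $j,j'\in[c]$, and let $\gamma,\gamma'$ agree everywhere except $\gamma_k = j$, $\gamma'_k = j'$. Subtracting the two equations $Q(X)_\gamma = 0$ and $Q(X)_{\gamma'} = 0$ yields $X_{k,j} - X_{k,j'} = 0$. Since $k$, $j$, $j'$ were arbitrary, each row of $X$ is constant, so $X_{i,j} = d_i$ for some $d\in\R^n$ independent of $j$; equivalently $X = \Diag(d)\eins_{n\times c}$. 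Plugging this back into any single equation $Q(X)_\gamma = 0$ gives $\sum_{i\in[n]} d_i = 0$, i.e. $\langle d,\eins_n\rangle = 0$. This establishes the claimed description of $\mkernel Q$.

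For the rank, I would note that the kernel as described is parameterized linearly and injectively by the hyperplane $\{d\in\R^n : \langle d,\eins_n\rangle = 0\}$ (injectivity because $\Diag(d)\eins_{n\times c} = 0$ forces $d = 0$), so $\dim\mkernel Q = n - 1$. Since $Q\colon\R^{n\times c}\to\R^N$ has domain of dimension $nc$, rank--nullity gives $\mrank Q = nc - (n-1)$, as asserted.

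I do not expect a genuine obstacle here; the argument is entirely elementary linear algebra. The only point requiring a little care is the ``differ in one coordinate'' step, which implicitly uses $c\ge 2$ and $n\ge 1$ so that the relevant multi-indices exist (for $c = 1$ the statement is vacuous since $\R^{n\times 1}$ maps to a point and the hyperplane condition then still describes the kernel correctly, so no separate case is truly needed). One could alternatively phrase the whole computation via the adjoint identity of Lemma~\ref{lem:Q_adjoint}, observing $\mkernel Q = (\rge M)^\perp$ and computing $\rge M$, but the direct comparison argument above is shorter and self-contained.
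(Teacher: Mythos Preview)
Your proof is correct and follows essentially the same approach as the paper: both arguments compare $Q(X)_\gamma$ at two multi-indices differing in a single coordinate to force each row of $X$ to be constant, then substitute back to obtain $\langle d,\eins_n\rangle = 0$, and finish with a dimension count (rank--nullity). The only cosmetic difference is that you prove the easy inclusion first while the paper does it second.
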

\begin{proof}
	Let $V\in \mkernel Q$ and let $\gamma, \tilde \gamma$ be two multi-indices which differ exactly at position $k$ but are otherwise arbitrary. We have $(QV)_{\gamma} = (QV)_{\tilde\gamma} = 0$ because $V\in \mkernel Q$. Thus
	\begin{equation}
		(QV)_{\tilde\gamma}
		= V_{k,\tilde\gamma_k} + \sum_{i\in [n]\setminus \{k\}} V_{i, \tilde\gamma_i}
		= (QV)_{\gamma}
		= V_{k,\gamma_k} + \sum_{i\in [n]\setminus \{k\}} V_{i, \gamma_i}
	\end{equation}
	which implies $V_{k,\tilde\gamma_k} = V_{k,\gamma_k}$, i.e. $V = \Diag(d)\eins_{n\times c}$ for some $d\in\R^n$ since $k$ was arbitrary. Further, it holds
	\begin{equation}
		0 = (QV)_\gamma = \sum_{i\in [n]} V_{i,\gamma_i} = \sum_{i\in [n]} d_i = \la d, \eins_n\ra.
	\end{equation}
	Thus, we have shown
	\begin{equation}\label{eq:mkernel_Q_inclusion}
		\mkernel Q \subseteq \{\Diag(d)\eins_{n\times c}\colon d\in\R^n,\;\la d,\eins_n\ra = 0\}.
	\end{equation}
	Conversely, let $V$ be in the right-hand set. Then
	\begin{equation}
	(QV)_\gamma = \sum_{i\in [n]} V_{i,\gamma_i} = \sum_{i\in [n]} d_i = \la d, \eins_n\ra = 0
	\end{equation}
	for all $\gamma\in [c]^n$ which shows that \eqref{eq:mkernel_Q_inclusion} is an equation. 
	There are $(n-1)$ linearly independent vectors $d\in \R^n$ with $\la d, \eins_n\ra = 0$, therefore $Q$ has the specified rank.
\end{proof}

\section{Proofs}\label{app:proofs}
\subsection{Proof of Theorem~\ref{prop:isometric_embedding_T}}\label{proof:prop:isometric_embedding_T}
\begin{theorem}[Theorem~\ref{prop:isometric_embedding_T} in the main text]\label{app:prop:isometric_embedding_T}
The map $T\colon \WM\to\imT$ is an isometric embedding of $\WM$ equipped with product Fisher-Rao geometry into $\Sn$ equipped with the Fisher-Rao geometry. On its image $T(\WM) =: \imT\subseteq \Sn$, the inverse is given by marginalization 
\begin{equation}\label{app:eq:T_inv}
  \Margin|_\imT = T^{-1} \colon \imT \to \WM.
\end{equation}
\end{theorem}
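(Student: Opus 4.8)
The plan is to verify three things: that $T$ is a smooth embedding (injective immersion that is a homeomorphism onto its image), that $M|_{\mathcal T}$ is a two-sided inverse, and that $T$ pulls back the Fisher-Rao metric on $\mathcal{S}_N$ to the product Fisher-Rao metric on $\mathcal{W}$.

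\textbf{Injectivity and the inverse.} Injectivity of $T$ is already available (Lemma~\ref{lem:T_injective}). For the inverse, I would compute $M(T(W))_{ij}$ directly from the definitions \eqref{eq:T_def} and \eqref{eq:def_Margin}: summing $\prod_{k} W_{k,\gamma_k}$ over all $\gamma$ with $\gamma_i=j$ factors as $W_{i,j}\prod_{k\ne i}\big(\sum_{\ell} W_{k,\ell}\big) = W_{i,j}$, since each row of $W$ sums to $1$. Hence $M\circ T = \mathrm{id}_{\mathcal W}$, which together with injectivity of $T$ gives $T^{-1} = M|_{\mathcal T}$ and also shows $T$ is a homeomorphism onto $\mathcal T$ (it has a continuous inverse, being the restriction of a linear map). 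This same computation incidentally confirms $T(W)\in\mathcal{S}_N$: all entries are positive products of positive numbers, and taking $M$ and summing over $j$ (or summing $T(W)_\gamma$ over all $\gamma$ directly, which telescopes into $\prod_i \langle \eins_c, W_i\rangle = 1$) gives normalization.

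\textbf{The isometry.} This is the substantive part. Using the closed form for $dT$ from Lemma~\ref{lem:dT_closed_form}, $dT|_W[V] = T(W)\diamond Q[V/W]$, I would compute the pullback metric. With $p := T(W)$ and the Fisher-Rao metric \eqref{eq:fisher_rao_metric} on $\mathcal{S}_N$, for $V, \tilde V \in T_0\mathcal{W}$,
\begin{equation}
  \langle dT|_W[V], dT|_W[\tilde V]\rangle_g = \Big\langle \frac{p\diamond Q[V/W]}{p}, p\diamond Q[\tilde V/W]\Big\rangle = \big\langle p, Q[V/W]\diamond Q[\tilde V/W]\big\rangle.
\end{equation}
The key identity needed is that $\big\langle p,\, Q[X]\diamond Q[Y]\big\rangle = \sum_{i\in[n]} \langle W_i, X_i\diamond Y_i\rangle$ whenever $p = T(W)$, $\sum_j X_{ij}W_{ij}$-type cross terms vanish appropriately. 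Expanding $Q[X]_\gamma Q[Y]_\gamma = \big(\sum_i X_{i,\gamma_i}\big)\big(\sum_k Y_{k,\gamma_k}\big)$ and multiplying by $p_\gamma = \prod_\ell W_{\ell,\gamma_\ell}$, then summing over $\gamma$: the diagonal terms $i=k$ give $\sum_i \sum_{\gamma} X_{i,\gamma_i}Y_{i,\gamma_i}\prod_\ell W_{\ell,\gamma_\ell} = \sum_i \langle W_i, X_i\diamond Y_i\rangle$ after marginalizing the other factors; the off-diagonal terms $i\ne k$ give $\sum_{i\ne k}\big(\sum_j X_{ij}W_{ij}\big)\big(\sum_\ell Y_{k\ell}W_{k\ell}\big) = \sum_{i\ne k}\langle W_i, X_i\rangle\langle W_k, Y_k\rangle$. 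Setting $X = V/W$ and $\tilde X = \tilde V / W$, the off-diagonal terms become $\sum_{i\ne k}\langle \eins_c, V_i\rangle\langle\eins_c,\tilde V_k\rangle = 0$ because $V,\tilde V\in T_0\mathcal W$ means each block $V_i,\tilde V_i\in T_0\mathcal{S}_c$ is orthogonal to $\eins_c$. What survives is exactly $\sum_i \langle V_i/W_i, \tilde V_i\rangle = \sum_i \langle V_i, \tilde V_i\rangle_g$, the product Fisher-Rao inner product. This also shows $dT|_W$ is injective (the pullback metric is positive definite), so $T$ is an immersion.

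\textbf{Main obstacle.} The bookkeeping in the metric computation — carefully separating diagonal from off-diagonal contributions in the double sum over $\gamma$ and tracking which marginalizations collapse — is where care is needed; the crucial structural point is that the off-diagonal terms are precisely the ones killed by the tangency condition $V_i\perp\eins_c$, which is what makes the product metric (rather than something with cross-population coupling) come out. Everything else (injectivity, the marginalization inverse, positivity) is routine once Lemmas~\ref{lem:T_injective} and~\ref{lem:dT_closed_form} are in hand.
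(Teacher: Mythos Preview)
Your proposal is correct, and in several places takes a genuinely different (and more economical) route than the paper.

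For the topological embedding, the paper argues via $e$-coordinates: it identifies $\mc{Q}=Q(T_0\WM)\subseteq T_0\Sn$, uses Lemma~\ref{lem:ker_Q} to see $Q|_{T_0\WM}$ is a homeomorphism onto $\mc{Q}$, invokes the Lifting Map Lemma to factor $T$ through the global chart $\exp_{\eins_{\Sn}}$, and then checks that the induced topology on $\imT$ coincides with the subspace topology. Your observation that $T^{-1}=M|_{\imT}$ is the restriction of a linear map, hence continuous in the subspace topology, makes all of this unnecessary: a continuous bijection with continuous inverse is already a homeomorphism.

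For the immersion property, the paper computes $\ker dT|_W$ explicitly using Lemma~\ref{lem:ker_Q}. You instead deduce injectivity of $dT|_W$ from the fact that the pullback metric is the (positive definite) product Fisher-Rao metric, which is a clean shortcut once the metric computation is done.

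For the metric compatibility, the paper uses a slick adjoint argument: from $dT|_W[U]=T(W)\diamond Q[U/W]$ it writes the Fisher-Rao pairing as $\la dT|_W[U],Q[V/W]\ra$, applies Lemma~\ref{lem:Q_adjoint} to get $\la M\,dT|_W[U],V/W\ra$, and then uses $M\circ T=\mrm{id}_{\WM}$ (hence $M\circ dT|_W=\mrm{id}$) to collapse this to $\la U,V/W\ra$. Your direct expansion of $\la p,Q[X]\diamond Q[Y]\ra$ into diagonal and off-diagonal blocks, with the off-diagonal terms killed by $V_i\perp\eins_c$, reaches the same conclusion by explicit computation. The paper's version is shorter and highlights the structural identity $M\circ dT=\mrm{id}$; yours is more hands-on but makes transparent exactly where the tangency condition enters.
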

\begin{proof}
A standard argument (Lemma~\ref{lem:T_injective}) shows that $T\colon \WM\to \imT$ is injective.
We check that the inverse of $T$ has the shape \eqref{eq:T_inv}.
\begin{align}
  (\Margin T(W))_{i,j} 
    &= \sum_{\gamma\;:\; \gamma_i = j}\prod_{r\in [n]} W_{r,\gamma_r} 
    = \sum_{\gamma\;:\; \gamma_i = j}W_{i, j}\prod_{r\in [n]\setminus \{i\}} W_{r,\gamma_r} \\
    &= \sum_{l\in[n]\setminus\{i\}}\sum_{\gamma_{l}\in [c]}\prod_{r\in [n]\setminus \{i\}} W_{r,\gamma_r}\\
    &= W_{i,j} \sum_{k_1\in [c]} W_{1,k_{1}}\sum_{k_2\in [c]} W_{2,k_{2}}\dotsc \sum_{k_n\in [c]} W_{n,k_{n}}\\
    &= W_{i,j} \prod_{r\in [n]\setminus \{i\}} \underbrace{\sum_{\gamma_r \in [c]} W_{r, \gamma_r}}_{= 1} = W_{i,j}.
\end{align}
Clearly, all component functions of $T$ and $T^{-1}$ are smooth.
We will now show that $T$ is a topological embedding, i.e. a homeomorphism with respect to the subspace topology of $\imT\subseteq \Sn$.
Let 
\begin{equation}\label{eq:img_Q_tangent}
\mc{Q} = Q(T_0\WM)
\end{equation}
denote the image of $T_0\WM$ under $Q$. $\mc{Q}$ is a linear subspace of $T_0\Sn$ because, for any $V\in T_0\WM$, we have
\begin{equation}
  QV = Q\Pi_0 V = \Pi_0 QV \in T_0\Sn
\end{equation}
by Lemma~\ref{lem:Q_proj_commute}. In addition, Lemma~\ref{lem:ker_Q} shows $\mkernel Q \cap T_0\WM = \{0\}$, since any matrix in $\ker Q$ has constant row vectors. Thus, the restriction of $Q$ to $T_0\WM$ is injective and since $T_0\WM$ and $\mc{Q}$ have finite dimension, $Q|_{T_0\WM}$ is a homeomorphism.
The lifting map at the barycenter is the inverse of the \emph{global} $e$-coordinate chart of information geometry up to a change of basis. In particular, $\exp_{\BW}\colon T_0\WM\to \WM$ and $\exp_{\eins_{\mc{S}_{N}}}\colon T_0\Sn\to \Sn$ are homeomorphisms.
Now let
\begin{equation}
  \psi\colon \imT\to \mc{Q},\qquad p\mapsto \psi(p) = \exp_{\eins_{\mc{S}_{N}}}^{-1}(p)
\end{equation}
which is well-defined due to Lemma~\ref{lem:lifting_map} and denote the initial topology of $\imT$ with respect to $\psi^{-1}$ by $\mc{A}$. Then $T$ is a homeomorphism of $\WM$ and $\imT$ equipped with the topology $\mc{A}$ because
\begin{equation}
  T = \exp_{\eins_{\mc{S}_{N}}}\circ Q|_{T_0\WM} \circ \psi^{-1}
\end{equation}
by Lemma~\ref{lem:lifting_map}.
It remains to show that $\mc{A}$ coincides with the subspace topology of $\imT\subseteq\Sn$. Note that the topology of $\mc{Q}$ is the subspace topology of $\mc{Q}\subseteq T_0\Sn$ and recall that $\exp_{\eins_{\mc{S}_{N}}}\colon T_0\Sn\to \Sn$ is a homeomorphism. For a subset $A\subseteq \mc{Q}$ we thus have
\begin{subequations}
\begin{align}
  A\in \mc{A}\,&\Leftrightarrow\, \psi(A)\text{ is open in }\mc{Q}\\
  &\Leftrightarrow\, \exp_{\eins_{\mc{S}_{N}}}^{-1}(A) = B \cap \mc{Q}\text{ for an open set }B\subseteq T_0\Sn\\
  &\Leftrightarrow\, \exp_{\eins_{\mc{S}_{N}}}^{-1}(A) = \exp_{\eins_{\mc{S}_{N}}}^{-1}(\ol A) \cap \mc{Q}\text{ for an open set }\ol A\subseteq \Sn\\
  &\Leftrightarrow\, A = \ol A \cap \exp_{\eins_{\mc{S}_{N}}}(\mc{Q})\text{ for an open set }\ol A\subseteq \Sn\\
  &\Leftrightarrow\, A = \ol A \cap \imT \text{ for an open set }\ol A\subseteq \Sn.
\end{align}
\end{subequations}
This shows that $\mc{A}$ is the subspace topology of $\imT\subseteq \Sn$ and thus, $T$ is a topological embedding of $\WM$ into $\Sn$.

We compute the rank of $T$ by applying Lemma~\ref{lem:dT_closed_form}. Let $W\in\mc{W}$ and $V\in T_0\mc{W}$ be in the kernel of $dT|_W$. Then
\begin{equation}
  0 = dT|_W[V] = T(W)\diamond Q\Big[\frac{V}{W}\Big]
\end{equation}
which implies $\frac{V}{W} \in \mkernel Q$ because $T(W)_\gamma \neq 0$ for all $\gamma\in [c]^n$. By Lemma~\ref{lem:ker_Q} this implies
\begin{equation}\label{eq:embedding_rank_v}
  V = W\diamond (\Diag(d)\eins_{n\times c}) = \Diag(d)W
\end{equation}
for some $d\in\R^n$ with $\la d,\eins_n\ra = 0$. From $V\in T_0\mc{W}$ we find
\begin{equation}
  0 = \la V_i, \eins_c\ra = d_i\la W_i, \eins_c\ra = d_i,\qquad \forall i\in [n]
\end{equation}
which shows $V = 0$ by \eqref{eq:embedding_rank_v}, i.e. $dT|_W$ has full rank. Thus, $T$ is an injective immersion.

It remains to show that $T$ is metric compatible.
Suppose $W \in \WM$ and  $U, V \in T_0\WM$ are arbitrary. Denoting the Fisher-Rao metric on $\Sn$ by $g^{\SM_N}$ we get
\begin{subequations}
\begin{align}
  (T^* g^{\SM_N}\big)_W(U, V) &\overset{\phantom{\eqref{eq:dT_closed_form}}}{=} g^{\SM_N}_{T(W)}(dT|_W[U], dT|_W[V])\\
  &\overset{\phantom{\eqref{eq:dT_closed_form}}}{=} \big\la dT|_W[U], \tfrac{\eins}{T(W)}\diamond dT|_W[V] \big\ra\\
  &\overset{\eqref{eq:dT_closed_form}}{=} \Big\la dT|_W[U], Q\Big[\frac{V}{W}\Big] \Big\ra\\
  &\overset{\phantom{\eqref{eq:dT_closed_form}}}{=}\Big\la \Margin dT|_W[U], \frac{V}{W}\Big\ra.\label{eq:prop:T_is_metric_compatible:eq1}
\end{align}
\end{subequations}
Note that $\Margin$ is linear, implying $d\Margin|_p = \Margin$ for every $p \in \SM_N$. Since $\Margin$ restricted to $\imT = T(\WM)$ is the inverse of $T$, one ahs $\Margin \circ T = \mrm{id}_{\WM}$. These two facts imply
\begin{equation}
  \Margin\big[ dT|_W[U]\big] = d\Margin|_{T(W)}\big[ dT|_W[U]\big] = d\big(\Margin\circ T\big)|_W[U] = d (\mrm{id}_{\WM})|_W [U] = U.
\end{equation}
Plugging this result back into \eqref{eq:prop:T_is_metric_compatible:eq1} gives
\begin{equation}
  (T^* g^{\SM_N}\big)_W(U, V) = \Big\la U, \frac{V}{W}\Big\ra = g^\WM_W(U, V)
\end{equation}
which shows the assertion.
\end{proof}

\subsection{Proof of Proposition~\ref{prop:max_entropy}}\label{proof:prop:max_entropy}
\begin{proposition}[Proposition~\ref{prop:max_entropy} in the main text]\label{app:prop:max_entropy}
For every $W\in\WM$, the distribution $T(W)\in\Sn$ has maximum entropy among all $p\in \Sn$ subject to the marginal constraint $\Margin p = W$.
\end{proposition}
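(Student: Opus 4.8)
The plan is to run the classical ``maximum entropy $=$ independence'' argument, but packaged through the adjoint identity of Lemma~\ref{lem:Q_adjoint} so that almost no extra computation is needed. Write the Shannon entropy of $p\in\Sn$ as $H(p) = -\la p,\log p\ra$ (the $\log$ acting componentwise, as in the paper's conventions). First I would note that $T(W)$ is itself feasible for the problem: its components $\prod_{i\in[n]}W_{i,\gamma_i}$ are strictly positive, so $T(W)\in\Sn$, and $\Margin T(W)=W$ by Theorem~\ref{prop:isometric_embedding_T}. So it suffices to show $H(p)\le H(T(W))$ for every $p\in\Sn$ with $\Margin p = W$.

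The key observation is that the cross-entropy of any feasible $p$ against $T(W)$ depends only on $W$, not on $p$. Indeed, by \eqref{eq:T_def} and \eqref{eq:def_Q} one has $\log T(W) = Q(\log W)$, so for every $p\in\Sn$ with $\Margin p = W$, Lemma~\ref{lem:Q_adjoint} yields
\[
-\la p,\log T(W)\ra \;=\; -\la p, Q(\log W)\ra \;=\; -\la \Margin p,\log W\ra \;=\; -\la W,\log W\ra \;=\; \sum_{i\in[n]} H(W_i).
\]
Taking $p=T(W)$ in particular gives $H(T(W)) = \sum_{i\in[n]} H(W_i)$. Hence, for an arbitrary feasible $p$,
\[
H\big(T(W)\big) - H(p) \;=\; \la p,\log p\ra - \la p,\log T(W)\ra \;=\; \big\la p,\log\tfrac{p}{T(W)}\big\ra \;=\; \KL\big(p \,\|\, T(W)\big) \;\ge\; 0
\]
by Gibbs' inequality (nonnegativity of the Kullback--Leibler divergence), which proves the claim. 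If uniqueness of the maximizer is also wanted, it follows from strict positivity of $\KL(p\,\|\,T(W))$ whenever $p\neq T(W)$, equivalently from strict concavity of $H$ on the convex feasible set.

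There is no real obstacle here; the only thing to get right is the bookkeeping of the multi-index sums, which Lemma~\ref{lem:Q_adjoint} already handles. An equally short alternative is a Lagrangian/KKT argument: $H$ is strictly concave and the feasible set is the intersection of the open simplex with the affine subspace $\{p : \Margin p = W\}$, so a unique maximizer exists and is characterized by $\log p_\gamma + 1 = \mu + \sum_{i\in[n]}\lambda_{i,\gamma_i}$ for a normalization multiplier $\mu\in\R$ and marginal multipliers $\lambda\in\R^{n\times c}$; this forces $p_\gamma = \prod_{i\in[n]} a_{i,\gamma_i}$ to have rank-one product form, i.e.\ $p\in\imT$, and then $\Margin p = W$ together with $\Margin|_\imT = T^{-1}$ (Theorem~\ref{prop:isometric_embedding_T}) pins down $p = T(W)$. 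I would present the KL-divergence computation as the main proof, since it is coordinate-free and establishes the inequality for all feasible $p$ simultaneously, and only mention the KKT route as a remark.
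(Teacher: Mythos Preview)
Your proof is correct. Both you and the paper arrive at the same decisive identity $H(T(W)) - H(p) = \KL(p \,\|\, T(W)) \ge 0$, but you reach it by a different and more elementary path. The paper invokes the information-geometric Pythagorean relation: it checks that the feasible set $\{T(W)+u : \Margin u = 0\}\cap\Sn$ is $m$-flat, that $\imT$ is $e$-flat (via the lifting-map Lemma~\ref{lem:lifting_map}), and that the $m$-geodesic from $p$ to $T(W)$ meets $\imT$ orthogonally (via Lemma~\ref{lem:dT_closed_form}), then cites \cite[Theorem~3.8]{amari2007methods}. You bypass all of this by the single observation $\log T(W) = Q(\log W)$ together with Lemma~\ref{lem:Q_adjoint}, which shows directly that the cross-entropy $-\la p,\log T(W)\ra$ is constant on the feasible set; the KL identity then drops out immediately. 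Your argument is self-contained and needs no external information-geometry references, whereas the paper's version situates the result within the dually-flat framework that informs the rest of the article. Either is fine here; your route is the cleaner one for this particular statement.
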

\begin{proof}
We use the concepts of m-flat and e-flat submanifolds of information geometry, which justify applying the Pythagorean relation of information geometry.
For details, we refer to \cite{amari2007methods}.
The feasible set of all distributions with the prescribed marginals reads
\begin{equation}\label{eq:max_entropy_feasible}
\{ T(W) + u\colon Mu = 0\} \cap \Sn
\end{equation}
which is an m-flat submanifold of $\Sn$. In addition, Lemma~\ref{lem:lifting_map} shows that $\imT$ is an e-flat submanifold of $\Sn$.
Let $p = T(W) + u$ denote an arbitrary feasible point.
By \eqref{eq:max_entropy_feasible} and Lemma~\ref{lem:Q_adjoint} we have
\begin{equation}\label{eq:max_entropy_orthog1}
  \la u, QV\ra = \la \Margin u, V\ra = 0
\end{equation}
for all $V \in \R^{n\times c}$. 
Consider the $m$-geodesic connecting $p$ with $T(W)$. It intersects $\imT$ at $T(W)$ and we find
\begin{equation}\label{eq:m_geod_angle}
  \la dT|_W[V], u\ra = \la T(W) \diamond Q\Big[\frac{V}{W}\Big], u\ra_{T(W)}
  = \la Q\Big[\frac{V}{W}\Big], u\ra
  = \la \frac{V}{W}, \Margin u\ra
  = 0
\end{equation}
by using Lemma~\ref{lem:dT_closed_form}. With \eqref{eq:m_geod_angle}, m-flatness of \eqref{eq:max_entropy_feasible} and e-flatness of $\imT$ the prerequisites for the Pythagorean relation of information geometry \cite[Theorem~3.8]{amari2007methods} are met.
Using the cross-entropy $H(p, q) = -\la p, \log q\ra$ as well as the relative entropy $\KL(p, q) = \la p, \log \frac{p}{q}\ra$ and barycenter $\eins_{\mc{S}_{N}} = \frac{1}{N}\eins$, we find
\begin{align}\label{eq:entropy_tW}
  H(T(W)) &= H(T(W), \eins_{\Sn}) - \KL(T(W), \eins_{\Sn}) \nonumber\\
  &= \log N - \KL(T(W), \eins_{\Sn})
\end{align}
and consequently
\begin{align}
  H(p) &= H(p, \eins_{\Sn}) - \KL(p, \eins_{\Sn}) \\
    &= \log N - \KL(p, \eins_{\Sn})\\
    &= H(T(W)) + \KL(T(W), \eins_{\Sn}) - \KL(p, \eins_{\Sn})\\
    &\stackrel{(\ast)}{=} H(T(W)) + \KL(T(W), \eins_{\Sn}) - \KL(p, T(W)) - \KL(T(W), \eins_{\Sn})\\
    &= H(T(W)) - \KL(p, T(W))
\end{align}
by the Pythagorean relation $(\ast)$. 
Therefore $H(p) \leq H(T(W))$ with equality only for $p = T(W)$ which shows the assertion.
\end{proof}

\subsection{Proof of Lemma~\ref{lem:lifting_map}}\label{proof:lem:lifting_map}
\begin{lemma}[\textbf{Lifting Map Lemma}]\label{app:lem:lifting_map}
  Let $S\in \WM$ and $V\in \R^{n\times c}$. Then
  \begin{equation}\label{app:eq:lifting_map_delta}
    T(\exp_{S}(V)) = \exp_{T(S)}(Q(V)).
  \end{equation}
\end{lemma}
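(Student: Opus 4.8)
The plan is a direct componentwise computation. Since $\exp_{S}$ acts separately on each node, for every $i\in[n]$ and every multi-index $\gamma\in[c]^n$ we have
\begin{equation*}
  \big(\exp_{S}(V)\big)_{i,\gamma_i} = \frac{S_{i,\gamma_i}\exp(V_{i,\gamma_i})}{\la S_i, \exp(V_i)\ra}.
\end{equation*}
Applying $T$ and using its definition \eqref{eq:T_def} together with \eqref{eq:def_Q}, the numerator factors:
\begin{equation*}
  \prod_{i\in[n]} S_{i,\gamma_i}\exp(V_{i,\gamma_i}) = \Big(\prod_{i\in[n]}S_{i,\gamma_i}\Big)\exp\Big(\sum_{i\in[n]}V_{i,\gamma_i}\Big) = T(S)_\gamma\,\exp\big(Q(V)_\gamma\big),
\end{equation*}
while the denominator $\prod_{i\in[n]}\la S_i,\exp(V_i)\ra$ does not depend on $\gamma$. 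Hence $T(\exp_{S}(V))$ is the vector $T(S)\diamond\exp(Q(V))$ up to a positive scalar.

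On the other hand, the extended lifting map \eqref{eq:lifting_map_def} gives $\exp_{T(S)}(Q(V))_\gamma = T(S)_\gamma\exp(Q(V)_\gamma)/\la T(S),\exp(Q(V))\ra$, which is the \emph{same} vector $T(S)\diamond\exp(Q(V))$ up to a positive scalar. Since $\exp_{S}(V)\in\WM$, its image under $T$ lies in $\ms{T}\subseteq\Sn$ by Theorem~\ref{prop:isometric_embedding_T}, and $\exp_{T(S)}(Q(V))\in\Sn$ by definition; both therefore sum to one. Two proportional vectors in $\Sn$ coincide, which proves \eqref{app:eq:lifting_map_delta}. Alternatively, one verifies the normalizers agree directly: by the distributive law over the multi-index set,
\begin{equation*}
  \la T(S),\exp(Q(V))\ra = \sum_{\gamma\in[c]^n}\prod_{i\in[n]}S_{i,\gamma_i}\exp(V_{i,\gamma_i}) = \prod_{i\in[n]}\sum_{j\in[c]}S_{i,j}\exp(V_{i,j}) = \prod_{i\in[n]}\la S_i,\exp(V_i)\ra.
\end{equation*}

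The computation is elementary; the only point requiring care is the factorization of the sum over $[c]^n$ into a product of sums over $[c]$, i.e. the identity $\sum_{\gamma}\prod_i a_{i,\gamma_i} = \prod_i\sum_j a_{i,j}$, which is precisely the structural reason $T$ intertwines the two lifting maps. I expect no genuine obstacle beyond keeping the multi-index bookkeeping straight and noting that $\exp_{T(S)}$ on the right-hand side must be read as the extension of \eqref{eq:lifting_map_def} to $\R^N$, applied to $Q(V)\in\R^N$, which is legitimate by \eqref{eq:exp-Pi}.
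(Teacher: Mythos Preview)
Your proof is correct and follows essentially the same route as the paper: both arguments reduce to showing that each side is proportional to $T(S)\diamond\exp(Q(V))$ and then conclude equality because both vectors lie in $\Sn$. One small caveat: you invoke Theorem~\ref{prop:isometric_embedding_T} to place $T(\exp_S(V))$ in $\Sn$, but that theorem's proof in the paper actually \emph{uses} the Lifting Map Lemma, so this reference is circular---fortunately the fact $T(\WM)\subseteq\Sn$ is immediate from the definition of $T$ (the product of the row sums is $1$), and in any case your alternative verification of the normalizers via $\sum_{\gamma}\prod_i a_{i,\gamma_i}=\prod_i\sum_j a_{i,j}$ settles the matter directly without appealing to anything else.
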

\begin{proof}
  We have $T(\exp(V)) = \exp(Q(V))$ (without subscripts, i.e. applying the exponential function componentwise), because for any multi-index $\gamma$
  \begin{subequations}\label{eq:lem_lifting_Qexp}
  \begin{align}
    \exp(Q(V))_\gamma 
    &= \exp\big(Q(V)_{\gamma}\big)
    = \exp\Big( \sum_{i\in [n]} V_{i,\gamma_i} \Big)\\
    &= \prod_{i\in [n]}\exp\Big( V_{i,\gamma_i} \Big) 
    = \prod_{i\in[n]}\big(\exp(V)\big)_{i,\gamma_{i}}\\
    &= T(\exp(V))_\gamma.
  \end{align}
  \end{subequations}
  Let $D\in \R^{n\times n}$ be a diagonal matrix with nonzero diagonal entries. Then $T(DR) \propto T(R)$ for any $R\in\R^{n\times c}$ because
  \begin{equation}\label{eq:lem_lifting_pixelscaling}
    T(DR)_\gamma = \prod_{i\in [n]} (DR)_{i,\gamma_i} = \Big(\prod_{i\in [n]} D_{ii}\Big)\Big(\prod_{i\in [n]}R_{i,\gamma_i}\Big) \propto T(R)_\gamma.
  \end{equation}
  It follows that
  \begin{equation}\label{eq:lem_lifting_proarg}
    T(\exp_{S}(V)) \propto T(S\diamond\exp(V)) \stackrel{\eqref{eq:lem_lifting_Qexp}}{=} T(S)\diamond \exp(Q(V)) \propto \exp_{T(S)}(Q(V))
  \end{equation}
  Because both the first and last term in \eqref{eq:lem_lifting_proarg} are clearly elements of $\Sn$, i.e. strictly positive vectors summing up ot 1, this implies the assertion.
\end{proof}

\subsection{Proof of Theorem~\ref{theorem:af_embedding}}\label{proof:theorem:af_embedding}

\begin{theorem}[\textbf{Multi-Population Embedding Theorem}]\label{app:theorem:af_embedding}
For any payoff function $F\colon \WM\to \R^{n\times c}$, the multi-population replicator dynamics
\begin{equation}\label{app:eq:general_af}
  \dot W = \ROW_W[F(W)], \quad W(0) = W_0
\end{equation}
on $\WM$ is pushed forward by $T$ to the replicator dynamics
\begin{equation}\label{app:eq:replicator_ms}
  \dot p(t) = \ROS_p(t)\wh F(p(t)), \quad p(0) = T(W_0), \quad \wh F = Q \circ F \circ \Margin
\end{equation}
on $\Sn$ and the map $T$ satisfies
\begin{equation}\label{eq:dT_ReplOp}
  dT|_W[\ROW_W[X]] = R_{T(W)} Q[X], \quad \text{for all } X \in \R^{n \times c} \text{ and } W\in \WM.
\end{equation}
\end{theorem}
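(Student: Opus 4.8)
The plan is to establish the differential identity \eqref{eq:dT_ReplOp} first; once this is in hand, the pushforward of \eqref{app:eq:general_af} to \eqref{app:eq:replicator_ms} follows in a couple of lines. The only ingredients needed are the closed form for $dT$ from Lemma~\ref{lem:dT_closed_form}, the adjointness of $\Margin$ and $Q$ from Lemma~\ref{lem:Q_adjoint}, and the inversion relation $\Margin|_\imT = T^{-1}$ from Theorem~\ref{prop:isometric_embedding_T}.

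\emph{Deriving the differential identity.} Fix $W\in\WM$ and $X\in\R^{n\times c}$. First I would check that $\ROW_W[X]\in T_0\WM$, so that Lemma~\ref{lem:dT_closed_form} is applicable: on node $i$ one has $(\ROW_W[X])_i = W_i\diamond X_i - \la W_i,X_i\ra W_i$, whose entries sum to $\la W_i,X_i\ra - \la W_i,X_i\ra = 0$. Componentwise division by $W$ then collapses the replicator term, $\big(\ROW_W[X]/W\big)_i = X_i - \la W_i,X_i\ra\eins_c$, hence $Q\big[\ROW_W[X]/W\big] = Q(X) - \la W,X\ra\,\eins_N$ with $\la W,X\ra$ the Frobenius product. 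Plugging this into Lemma~\ref{lem:dT_closed_form} gives $dT|_W[\ROW_W[X]] = T(W)\diamond Q(X) - \la W,X\ra\,T(W)$. On the other hand $R_{T(W)}Q[X] = T(W)\diamond Q(X) - \la T(W),Q(X)\ra\,T(W)$ by definition of the replicator operator on $\Sn$, so the claim \eqref{eq:dT_ReplOp} reduces to the scalar identity $\la W,X\ra = \la T(W),Q(X)\ra$. This is precisely Lemma~\ref{lem:Q_adjoint}, which gives $\la T(W),Q(X)\ra = \la \Margin T(W),X\ra$, combined with $\Margin T(W) = W$ from Theorem~\ref{prop:isometric_embedding_T}.

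\emph{Pushforward.} Let $W(t)$ solve \eqref{app:eq:general_af} and set $p(t):=T(W(t))$, so $p(0)=T(W_0)$. Differentiating through $T$ and applying \eqref{eq:dT_ReplOp} with $X=F(W(t))$ yields $\dot p = dT|_{W}[\ROW_{W}[F(W)]] = R_{p}\,Q[F(W)]$. Since $W(t)=\Margin(T(W(t)))=\Margin(p(t))$, again by Theorem~\ref{prop:isometric_embedding_T}, this equals $R_{p}\,(Q\circ F\circ\Margin)(p) = R_{p}\,\wh F(p)$, so $p(t)$ solves \eqref{app:eq:replicator_ms}. Running the computation in reverse (or simply invoking uniqueness, since $T$ is a diffeomorphism onto $\imT$ by Theorem~\ref{prop:isometric_embedding_T}) shows conversely that a solution of \eqref{app:eq:replicator_ms} started on $\imT$ remains on $\imT$ and pulls back under $\Margin=T^{-1}$ to a solution of \eqref{app:eq:general_af}; this is the asserted $T$-relatedness of the two vector fields.

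I do not anticipate a genuine obstacle here: the whole argument is bookkeeping around the rank-one structure $\ROS_p=\Diag(p)-pp^\top$ and the compatibility lemmas already established. The one step warranting a little care is the very first reduction — verifying that $\ROW_W[X]$ genuinely lands in $T_0\WM$ so that Lemma~\ref{lem:dT_closed_form} may be invoked, and noticing that componentwise division by $W$ is exactly what turns the unwieldy $\ROW_W[X]/W$ into the transparent $X_i-\la W_i,X_i\ra\eins_c$ that makes the subsequent $Q$-computation immediate.
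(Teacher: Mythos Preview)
Your proposal is correct and follows essentially the same route as the paper's proof: first establish \eqref{eq:dT_ReplOp} by invoking Lemma~\ref{lem:dT_closed_form}, simplifying $\ROW_W[X]/W$ nodewise to $X_i-\la W_i,X_i\ra\eins_c$, and then reducing to the scalar identity $\la W,X\ra=\la T(W),Q(X)\ra$ via Lemma~\ref{lem:Q_adjoint} and $\Margin T(W)=W$; the pushforward step is then immediate. The only difference is cosmetic---you compute $Q[\ROW_W[X]/W]$ globally whereas the paper works componentwise in $\gamma$---and your extra remark on the converse direction is not in the paper but is harmless.
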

\begin{proof}
We first show that, for any $W \in \WM$, the differential of $T$ and the replicator operator are related by \eqref{eq:dT_ReplOp}.
Let $\gamma \in [c]^n$ be an arbitrary multi-index. Because of $\mc{R}_W[X] \in \TW$, Lemma~\ref{lem:dT_closed_form} implies
\begin{equation}
  dT_\gamma|_W[\ROW_W[X]] = T_\gamma(W)Q_\gamma\bigg[ \frac{\ROW_W[X]}{W} \bigg] = T_\gamma(W) \sum_{i\in [n]}\frac{(\ROW_W[X])_{i, \gamma_i}}{W_{i, \gamma_i}}.
\end{equation}
Due to $(\ROW_W[X])_{i, \gamma_i} = W_{i, \gamma_i}(X_{i, \gamma_i} - \la X_i, W_i\ra)$, the sum can be written as
\begin{equation}
  \sum_{i\in [n]}\frac{(\ROW_W[X])_{i, \gamma_i}}{W_{i, \gamma_i}} = \sum_{i\in [n]}\big(X_{i, \gamma_i} - \la X_i, W_i\ra\big) = Q_\gamma[X] - \la X, W\ra.
\end{equation}
Additionally using the relation $W = \Margin[T(W)]$ due to \eqref{app:eq:T_inv}, and applying Lemma~\ref{lem:Q_adjoint} gives 
\begin{equation}
  \la X, W\ra = \la X, \Margin[T(W)]\ra = \la Q[X], T(W)\ra.
\end{equation}
Collecting all expressions, we have
\begin{equation}
  dT_\gamma|_W[\ROW_W[X]] = T_\gamma(W)\big(Q_\gamma[X] - \la Q[X], T(W)\ra\big) = \big(R_{T(W)}Q[X]\big)_\gamma
\end{equation}
which shows \eqref{eq:dT_ReplOp}.
Now, denoting $\p = T(W)\in \Sn$ we directly establish \eqref{app:eq:replicator_ms}
\begin{equation}
  \dot \p = dT(W)[\ROW_W[(F\circ \Margin)(\p)]] = \ROS_\p[(Q\circ F\circ \Margin)(\p)] = \ROS_\p[\wh F(\p)].
\end{equation}
\end{proof}

\end{appendices}

\end{document}